\documentclass[11pt]{amsart}
\linespread{1.2}
\usepackage[utf8]{inputenc}
\usepackage{lmodern}
\usepackage{amsmath, amsthm, amssymb, amsfonts,bm}
\usepackage[normalem]{ulem}
\usepackage{hyperref}

\usepackage{verbatim} 
\usepackage{longtable}

\usepackage{mathtools}

\usepackage{tikz}
\usetikzlibrary{decorations.pathmorphing}
\tikzset{snake it/.style={decorate, decoration=snake}}
\usepackage{tikz-cd}
\usetikzlibrary{positioning}

\usepackage{caption}

\usepackage{tikz-cd}
\usetikzlibrary{arrows}

\theoremstyle{plain}

\newtheorem{theorem}{Theorem}[section]
\newtheorem{conj}[theorem]{Conjecture}
\newtheorem{definition}[theorem]{Definition}
\newtheorem{prop}[theorem]{Proposition}

\newtheorem{cor}[theorem]{Corollary}
\newtheorem{rem}[theorem]{Remark}
\newtheorem{lem}[theorem]{Lemma}

\DeclareFontFamily{OT1}{rsfs}{}
\DeclareFontShape{OT1}{rsfs}{n}{it}{<-> rsfs10}{}
\DeclareMathAlphabet{\curly}{OT1}{rsfs}{n}{it}

\def\Q{\mathbb{Q}}

\def\Z{\mathbb{Z}}

\def\C{\mathbb{C}}

\def\P{\mathbb{P}}

\def\O{\mathcal{O}}

\def\a{\alpha}
\def\b{\beta}
\def\gm{\gamma}

\def\m{\mu}

\def\sm{\Sigma}

\def\m{\mathcal}
\def\f{\mathfrak}

\def\bb{\mathbb}

\def\td{\widetilde}

\def\tx{\textrm}
\def\Spec{\textrm{Spec}}

\def\ch{\textrm{ch}}

\def\Td{\textrm{Td}}
\def\Id{\textrm{Id}}
\def\rk{\textrm{rank}}

\def\ker{\textrm{ker}}

\def\gr{\textrm{gr}}
\def\Pic{\textrm{Pic}}
\def\Hom{\textrm{Hom}}
\def\Ext{\textrm{Ext}}
\def\res{\textrm{res}}
\def\Tr{\textrm{Tr}}
\def\para{\textrm{par-}}

\def\uxi{{\underline{\xi}}}
\def\huxi{\mathcal{H}_{\underline{\xi}}}
\def\muxi{\mathcal{M}_{\underline{\xi}}}
\def\zuxi{Z_{\underline{\xi}}}
\def\suxi{S_{\underline{\xi}}}
\def\euxi{\mathcal{E}_{\underline{\xi}}}
\def\euxiflagd{\mathcal{E}^\bullet_{\huxi\times D}: 0=\mathcal{E}^0_{\huxi\times D} \subset...\subset \mathcal{E}^l_{\huxi\times D}= (\mathcal{E}_{\uxi})|_{\huxi\times D}}
\def\euxiflagp{\mathcal{E}^\bullet_{\huxi\times p}: 0=\mathcal{E}^0_{\huxi\times p} \subset...\subset \mathcal{E}^l_{\huxi\times p}= (\mathcal{E}_{\uxi})|_{\huxi\times p}}
\def\quxi{Q_{\underline{\xi}}}
\def\pure{\textrm{pure}}
\def\um{{\underline{m}}}
\def\ua{{\underline{\alpha}}}
\def\nld{\mathcal{N}(l,D)}

\def\nldo{\mathcal{N}(1,D)}
\def\relativeh{\bm{\mathcal{H}}( C, D;r, d,\underline{\a}, \underline{m})}
\def\relativeho{\bm{\mathcal{H}}( C, D;1, d, \underline{m})}
\def\frelativeh{\mathfrak{H}(C,D;r,d,\underline{\a}, \underline{m})}
\def\frelativehxi{\mathfrak{H}_{\underline{\xi}}(C,D;r,d,\underline{\a}, \underline{m})}
\def\dE{\mathcal{E}}
\def\mparabun{\mathcal{M}(C,D;r,d,\underline{\alpha},\underline{m})}
\def\fmsheaves{\mathfrak{M}(S_{\underline{\xi}}; (0,\Sigma_{\underline{m}},c),\b, A) }

\newsavebox{\leftbox} \newsavebox{\rightbox}%

\NewDocumentCommand{\lrboxbrace}{s O{\{} O{\}} O{0.05\linewidth} m O{0.5\linewidth} m}{
  \begin{lrbox}{\leftbox}
    \IfBooleanTF{#1}
      {\begin{varwidth}{#4}#5\end{varwidth}}
      {\begin{minipage}{#4}#5\end{minipage}}
  \end{lrbox}
  \begin{lrbox}{\rightbox}
    \IfBooleanTF{#1}
      {\begin{varwidth}{#6}#7\end{varwidth}}
      {\begin{minipage}{#6}#7\end{minipage}}
  \end{lrbox}
  \ensuremath{\usebox\leftbox\left\{\,\usebox\rightbox\,\right\}}
}

\sloppy

\usepackage[backend=biber,style=alphabetic,sorting=nyt, giveninits=true,doi=false,isbn=false,maxbibnames=99]{biblatex} 
\usepackage{vmargin}
\setpapersize{A4}
\setmarginsrb{27mm}{12mm}{27mm}{12mm}%
{12mm}{10mm}{5mm}{10mm}

\pagestyle{plain}

\addbibresource{bib.bib}
\usepackage{tikz}
\usepackage{lmodern}
\usetikzlibrary{decorations.pathmorphing}

\begin{document}
\title{Generators for the cohomology of the moduli space of irregular parabolic Higgs bundles}

\author{Jia Choon Lee and Sukjoo Lee}
\address{Peking University, Beijing International Center for Mathematical Research, Jingchunyuan Courtyard \#78, 5 Yiheyuan Road, Haidian District,
Beijing 100871, China}
\email{jiachoonlee@pku.edu.cn}

\address{Department of Mathematics, University of Edinburgh, EH9 3FD, UK}
\email{Sukjoo.Lee@ed.ac.uk}

\begin{abstract}
We prove that the pure part of the cohomology ring of the moduli space of irregular $\underline{\xi}$-parabolic Higgs bundles is generated by the K\"{u}nneth components of the Chern classes of a universal bundle and the Chern classes of the successive quotients of a universal flag of subbundles. As an application, in the regular full-flag case, we demonstrate a similar result for the cohomology ring of the moduli spaces of parabolic and strongly parabolic Higgs bundles. 
\end{abstract}

\baselineskip=14.5pt
\maketitle

\setcounter{tocdepth}{2}

\tableofcontents
\section{Introduction}
It is well-known that the cohomology ring of the moduli space of stable sheaves on certain varieties $X$ can be generated by the tautological classes i.e. the K\"{u}nneth components of the Chern classes of a 
universal sheaf: Atiyah-Bott \cite{atiyah-bott} when $X$ is a curve, Ellingsrud-Stromme \cite{ellingsrud-stromme} when $X=\P^2$, Beauville \cite{beauville} when $X$ is a rational or ruled surface, Markman \cite{markman2002symplectic} when $X$ is a symplectic surface, Markman \cite{markman2006integral} when $X$ is a Poisson surface. For the moduli space of stable Higgs bundles on a curve, similar results are found in the work of Markman \cite{markman2002symplectic}\cite{markman2006integral}. On the other hand, for the moduli space of stable parabolic bundles on a curve, it is also known that the tautological classes obtained from a universal bundle alone are not enough to generate the cohomology ring, one needs to take into account of some extra classes coming from a universal flag of subbundles (see Biswas-Raghavendra \cite{biswas-raghavendra}).

In this paper, we consider the cohomology ring of the moduli space of stable parabolic Higgs bundles on a curve with a regular(=tame) or irregular(=wild) singularity and a fixed polar part for the Higgs fields. Historically, parabolic Higgs bundles with regular singularities were considered by Simpson \cite{simpson-noncompact} in order to generalize the non-abelian Hodge correspondence to punctured curves. The next natural generalization is to consider Higgs bundles on a curve with irregular singularities. According to the wild non-abelian Hodge correspondence of Sabbah \cite{sabbah} and Biquard-Boalch \cite{biquardboalch2004}, fixing an equivalence class of the polar part, the moduli space of stable parabolic Higgs bundles with irregular singularities is diffeomorphic (by a hyper-K\"{a}hler rotation) to the moduli space of stable parabolic connections with irregular singularities. Furthermore, the moduli spaces of Higgs bundles or connections on a curve with irregular singularites provide a wide class of  interesting examples of hyper-K\"{a}hler manifolds which are related to classical integrable systems
(see \cite{boalch2012hyperkahler}).

One of the motivations for considering the cohomology of the moduli spaces of parabolic Higgs bundles with irregularity singularities is inspired by the $P=W$ conjecture due to de Cataldo-Hausel-Migliorini \cite{dCMH12}. When there is no (regular or irregular) singularity on the Higgs field, the conjecture states that the perverse Leray filtration on the cohomology of the moduli space of Higgs bundles (Dolbeault side) is identified with the weight filtration of the cohomology of the corresponding character variety (Betti side) via the non-abelian Hodge correspondence. There are now several approaches to the $P=W$ conjectures (for $GL_n$) due to Maulik-Shen \cite{maulik2022pw}, Hausel-Mellit-Minets-Schiffmann \cite{hausel2022pw}, Maulik-Shen-Yin \cite{maulik2023perverse}. One can ask the same question when there are regular or irregular singularities (call it the wild $P=W$ conjecture) since the corresponding Dolbeault and Betti moduli spaces are related by the wild non-abelian Hodge correspondence. There are some works in this direction for special cases: Shen-Zhang \cite{shen-zhang21pw} for five families of parabolic Higgs bundles with regular singularity on $\P^1$, Szab\'{o} \cite{szabo21pw}\cite{Szabo23pw} for low dimensional moduli spaces of rank 2 Higgs bundles on $\P^1$ with (regular or irregular) singularities. 

A common and key ingredient for all the different approaches \cite{ShenMaulik2022},\cite{hausel2022pw},\cite{maulik2023perverse} of the $P=W$ conjecture is the above-mentioned theorem of Markman \cite{markman2002symplectic} about the generation result of the cohomology of the moduli space of Higgs bundles on a curve by tautological classes. The Chern filtration spanned by the tautological classes plays the role as an intermediate filtration in establishing the equality between the perverse Leray and weight filtrations on both sides. Therefore, in order to approach the wild $P=W$ conjecture, a natural first step will be to understand the generators of the cohomology ring of the moduli space of parabolic Higgs bundles with singularities. 

However, with the presence of parabolic structures, we need to include the Chern classes of the successive quotients of a universal flag of subbundles as in the case of moduli of parabolic bundles \cite{biswas-raghavendra}. These classes are known to be important from the viewpoint of representation theory e.g. in the global Springer theory of Yun \cite{yun-global-springer}. Furthermore, even in situations where the cohomology ring of the moduli space of Higgs bundles without parabolic structures are mainly concerned, it is often useful to first pass to the cohomology ring of the moduli of parabolic Higgs bundles  with regular singularities (without fixing polar parts) e.g. \cite{ShenMaulik2022}, \cite{hausel2022pw}, \cite{maulik2023perverse}.

\subsection{Main results}
Let $C$ be a smooth projective curve of genus $g\geq 0$ and $D=np$ supported at a point $p$ and $n\geq 1.$ Fix the numerical data: $r\geq 1, d\in \Z$, $1>\a_1> \dots> \a_l\geq 0$ (each $\a_i\in \Q)$, $m_1,\dots,m_l\in \Z\geq 0$ such that $\sum_{i=1}^l m_i=r$. An irregular parabolic Higgs bundle with a pole of order $n\geq 1$ at $p$ is a quadruple $(E, E^\bullet_D, \Phi, \ua)$ where $E$ is a rank $r$, degree $d$ vector bundle on $C$, $E^\bullet_D: 0= E^0_D\subset E^1_D\subset \dots \subset E^{l-1}_D\subset E^l_D=E|_D$ is a quasi-parabolic structure with $\chi(E^i_D/E^{i-1}_D)=nm_i$, $\Phi:E\to E\otimes K_C(D)$ is a Higgs field, $\ua=(\a_1,\dots,\a_l)$ is the set of parabolic weights, such that $\Phi_D(E^i_D)\subset E^{i}_D\otimes K_C(D)|_D$. When the order of the pole is 1 (i.e. $n=1$), we call it a regular parabolic Higgs bundle.

In order to fix the polar part of the Higgs field, we choose $\uxi=(\xi_1,\dots, \xi_l)$ where $\xi_i\in H^0(D, K_C(D)|_D).$ An irregular (resp. regular) $\uxi$-parabolic Higgs bundles is an irregular (resp. regular) parabolic Higgs bundle $(E, E^\bullet_D, \Phi, \ua)$ satisfying the following conditions: (1) $E^i_D/E^{i-1}_D$ is free of rank $m_i$ on $D$, (2) $\gr_i(\Phi_D) = \Id _{E^i_D/E^{i-1}_D}\otimes\xi_i$ for $i=1,\dots,l$.

We first show that there exists a coarse moduli space of stable irregular $\uxi$-parabolic Higgs bundles on $C$. In fact, we show the existence of a relative coarse moduli space of stable irregular $\uxi$-parabolic Higgs bundles on $C$ when $\uxi$ is varied over a base $\nld:=\{ \uxi= (\xi_1,...,\xi_l)| \xi_i\in H^0(D, K_C(D)|_D) ,  \sum \res (\xi_i)= 0 \}$. 
\begin{theorem}[Theorem \ref{existence-of-coarse-moduli}]\label{thm:intro-existence} 
    Fix the numerical data: $r\geq 1, d\in \Z$, $1>\a_1> ...> \a_l\geq 0$ where $\a_i\in \Q$, $m_1,...,m_l\in \Z_{\geq 0}$. There exists a relative coarse moduli scheme $P: \relativeh\to \nld$ of stable irregular $\uxi$-parabolic Higgs bundles.    \end{theorem}

We denote by $\huxi:=P^{-1}(\uxi)$ the moduli space of stable irregular $\uxi$-parabolic Higgs bundles for a fixed $\uxi\in \nld.$ Suppose that $\uxi$ is generic i.e. the leading terms of the $\xi_i$ are all distinct. Suppose $r$ and $d$ are coprime so that there exist a universal bundle $\m E_\uxi$ on $\huxi\times C$ and a universal flag of subbundles $\euxiflagd$. We denote by $\euxiflagp$ the  restriction of $\m E_{\huxi\times D}^\bullet$ to $\huxi\times \{p\}$ and $\quxi^i:= \m E^i_{\huxi\times p} /\m E^i_{\huxi\times p}$ the successive quotients of the flag $\m E_{\huxi\times p}^\bullet$. The goal of this paper is to show that the K\"{u}nneth components of the Chern classes of $\euxi$ and the Chern classes of $Q^i_\uxi$ generate the pure part of the cohomology $H^*(\huxi, \Q)$. 

Our strategy to study the generators of the cohomology ring follows closely the approach of Markman in the case of moduli space of Higgs bundles on a curve \cite{markman2002symplectic}\cite{markman2006integral}. In our situation, the first and key step is to employ the spectral correspondence due to Kontsevich-Soibelman \cite{Kontsevich-Soibelman} and Diaconescu-Donagi-Pantev \cite{Diaconescu_2018}. There is a similar spectral correspondence due to Szab\'{o} \cite{Szab2017TheBG} which works for an open subset of the moduli space, we will mainly follow \cite{Diaconescu_2018} since we need the correspondence for the whole moduli space. When $\uxi$ is generic, the spectral correspondence realizes $\huxi$ in terms of the moduli space of $\b$-twisted $A$-Gieseker stable pure dimension one sheaves on a (non-compact) holomorphic symplectic surface $\suxi$ for a suitable choice of $\b , A\in NS(\zuxi)_{\Q}$ with $A$ ample, where $\zuxi$ is a natural compactification of $\suxi$. The natural compactification $\zuxi$ of $\suxi$ also provides a modular compactification of $\huxi$ by the moduli space $\muxi$ of $\b$-twisted $A$-Gieseker stable pure dimension one sheaves on $\zuxi$. Then we apply the argument of Markman to express the diagonal class of $\muxi\times \huxi$ in terms of the Chern classes of a universal sheaf on $\muxi\times \zuxi.$ By applying the Grothendieck-Riemann-Roch Theorem, we in turn express these classes in terms of the K\"{u}nneth components of the Chern classes of $\euxi$ and the Chern classes of $\quxi^i$.

\begin{theorem}[Theorem \ref{thm:purgen}]\label{intro-purgen}
        Let $\uxi$ be generic. The pure cohomology $H^*_{pure}(\huxi, \Q)$ is generated by the K\"{u}nneth components of the Chern classes of $\euxi$ and the Chern classes of $\quxi^i$, where $1\leq i\leq l.$
    \end{theorem}

Moreover, since the symplectic surface $\suxi$ is constructed by a sequence of blow-ups on the total space $\textrm{Tot}(K_C(D))$ followed by the removal of a divisor, we can see that the Chern classes of $\quxi^i$ arises naturally from the classes of the exceptional divisors. 
 
Specializing to the regular $(n=1)$ and full-flag $(\um = (1,\dots,1))$ case, let $\m H:= \bm{\mathcal{H}}( C, p;r, d,\underline{\a}, (1,\dots, 1))$ and $\huxi= P^{-1}(\uxi)$ as before. We also denote by $\m M$ (resp. $\m M_0$) the coarse moduli space of stable regular parabolic (resp. strongly parabolic) Higgs bundles constructed by Yokogawa \cite{Yokogawa-compactification}. In this case, every regular parabolic Higgs bundle of the full-flag type is automatically a regular $\uxi$-parabolic Higgs bundle for a unique $\uxi$, so  we have $\m H\cong \m M$ (Proposition \ref{prop:isomodulispace}). Then we show that there is an isomorphism of $\Q$-mixed Hodge structures $H^*(\mathcal{H},\Q) \cong H^*(\huxi,\Q)$ for any $\uxi$ and they are of \textit{pure type}. This follows from the property of semi-projectivity of $\mathcal{H}$ which will be proved in Section \ref{sec:purity}. We note that this method has been used to show the similar result for the moduli space of Higgs bundles \cite{HauselRodriguez2013}. As an application of Theorem \ref{intro-purgen}, we have the following result. 

\begin{cor}[Corollary \ref{cor:generator reg par}]
    The cohomology ring of the coarse moduli space of stable regular parabolic (resp. strongly parabolic) Higgs bundles of the full-flag type $\mathcal{M}$ (resp. $\mathcal{M}_0$) is generated by the Künneth components of the Chern classes of a universal bundle and the Chern classes of the successive quotients of a universal flag of subbundles. 
\end{cor}
A similar result can be found in Oblomkov-Yun \cite[Theorem 3.1.8]{oblomkov2017cohomology} by a different approach.

All the results can be directly generalized to the case where there are more than one irreducible component in $D$, namely when $D=n_1p_1+\cdots+n_kp_k$ for $k>1$ and $n_j\geq 1$ for $j=1, \dots, k$. The moduli problem we study is easily generalized by putting the independent $\uxi_j$-parabolic conditions for each $j$. For the spectral correspondence, one can construct the holomorphic symplectic surface $S_{\uxi_1, \dots, \uxi_k}$ by simultaneously blowing-up the locus over each $p_j$. Moreover, this gives rise to a universal flag of subbundles corresponding to each $p_j$. Therefore, in this case, the generators in our generation result will be the Künneth components of the Chern classes of a universal bundle and the Chern classes of the successive quotients of a universal flag of subbundles corresponding to each $p_j$, where $j=1,\dots,k$. 

\subsection{Outline}
In Section 2, we discuss several moduli problems associated with irregular parabolic Higgs bundles and prove Theorem \ref{thm:intro-existence}. Additionally, we review the spectral correspondence whose proof will be in Appendix \ref{proof of spectral}. This will serve as a key argument in the next section. In Section 3, we study the generators of the pure part of the cohomology ring of the moduli space of stable $\uxi$-irregular parabolic Higgs bundles, applying the method used in Markman's works \cite{markman2002symplectic, markman2006integral}. As an application, we focus on the regular full-flag case and demonstrate the generators of the cohomology ring of the moduli space of regular (strongly) parabolic Higgs bundles. 

\subsection{Notations}\quad \\
\noindent \textbf{Conventions}: For a fixed scheme $X$ and $T\in \textrm{Sch}$ (or Sch/$B$ for any base scheme $B$), we will denote by $X_T:= X\times T$. If $T=\Spec(A)$ is affine, we also write $X_A:= X\times \Spec(A)$. For  a divisor $D$ in $X$ and  a coherent sheaf $F$ on $X$, we write $F_D= F\otimes_{\O_X}\O_D$.

\noindent \textbf{Notations}: Here we summarize the notations for various moduli spaces used in this paper. 
Fix $C$ to be a smooth projective curve, $p\in C$, $D=np$ for $n\geq 1$, $M=K_C(D)$, $r$ rank, $d$ degree,  $\underline{\alpha}$ parabolic weights, $\underline{m}=(m_1, \dots, m_l)$ where $\sum m_i=r$.
\begin{itemize}
    \item $\nld:=\{ \uxi= (\xi_1,...,\xi_l)| \xi_i\in H^0(D, M_D) ,  \sum \res (\xi_i)= 0 \}.$ 
    \item $P:\relativeh \to \nld$, the relative coarse moduli space of stable irregular $\uxi$-parabolic Higgs bundles. 
    \item $\huxi:=P^{-1}(\uxi)$, the coarse moduli space of stable irregular $\uxi$-parabolic Higgs bundles for $\uxi \in \nld$.
    \item $\mathcal{M}(C, D;r, d, \underline{\a}, \underline{m})$, the coarse moduli space of stable irregular parabolic Higgs bundles. 
    \item $\mathcal{M}_0(C, D;r, d, \underline{\a}, \underline{m})$, the coarse moduli space of stable irregular strongly parabolic Higgs bundles. 
    \item $G:\relativeh\to \mathcal{M}(C, D;r, d, \underline{\a}, \underline{m})$, the induced morphism introduced in Remark \ref{rem:comparison fails}.
\end{itemize}
In the regular ($n=1$) full-flag ($\underline{m}=\underline{1}=(1, \cdots, 1)$) case, we simply write 
\begin{itemize}
    \item $\mathcal{H}={\bf{\mathcal{H}}}(C,p;r,d,\underline{\alpha}, \underline{1})$ and $\mathcal{N}=\mathcal{N}(l,p)$.
    \item $\mathcal{M}=\mathcal{M}(C, p;r, d, \underline{\a}, \underline{1})$ and $\mathcal{M}_0=\mathcal{M}_0(C, p;r, d, \underline{\a}, \underline{1})$.
    
\end{itemize}

\section{Moduli spaces}
\subsection{Irregular parabolic Higgs bundles}
Let $C$ be smooth projective curve of genus $g\geq 0$ and $D= np$ supported at a point $p$ and $n \geq1 $. Let $M=K_C(D)$ be a twisted canonical line bundle. 

\begin{definition}
   An irregular parabolic Higgs bundle on $C$ with a pole of order $n\geq 1$ at $p$ consists of the following data: 
    
    \begin{enumerate}
        \item A Higgs bundle $(E, \Phi)$ where $\Phi: E\to E\otimes M$
        \item A quasi-parabolic structure:
        \begin{equation*}
            E^\bullet_D:  0 = E^0_D\subset E^1_D \subset ... \subset E^{l-1}_D\subset E^l_D = E_D
        \end{equation*}
        such that $\Phi_D(E^i_D) \subset E^i_D\otimes_D M_D$.

        \item A collection of parabolic weights $\underline{\a}= (\a_1,...,\a_l)\in \Q^l$: 
        \begin{equation*}
            1> \a_1 > \a_2>...>\a_l\geq 0
        \end{equation*}
        \end{enumerate}
        \end{definition} 
        For simplicity, we will only consider the case with a single pole in this paper, we simply call it an irregular parabolic Higgs bundle and denote it by $(E, E^\bullet_D,\Phi,\ua)$.
         When the order of the pole is 1 (i.e. $n=1$), we call it a regular parabolic Higgs bundle. When $\Phi_D(E^i_D) \subset E^{i-1}_D\otimes_D M_D$ for all $i$ in condition $(2)$, we call it a strongly (regular or irregular) parabolic Higgs bundle. 
         
    \begin{rem}\label{Yokogawa's definition}
        The more general definition of parabolic Higgs bundles used in \cite{Yokogawa-compactification} is the so-called parabolic $\Omega$-pairs on any smooth, projective, geometrically integral, locally noetherian scheme $X/S$, and $D$ a relative effective Cartier divisor on $X/S$, where $\Omega$ is any locally free sheaf. Suppose $S=\Spec(\C), X=C, D=np, \Omega = K_C(D)$. A parabolic $\Omega$-parabolic sheaf on $C$ consists of a vector bundle $E$, a filtration of vector bundles
        \begin{equation*}
            F^\bullet(E): E(-D)= F^0(E)\subset F^1(E)\subset ... \subset F^l(E)= E,
        \end{equation*}
        a collection of parabolic weights $1> \a_1> \a_2>\dots > \a_l\geq 0$ and a homomorphism $\Phi: E\to E\otimes \Omega$ such that $\Phi(F^i(E))\subset F^i(E)$. Given $E^\bullet_D$ as in the definition of a parabolic Higgs bundle, we can recover $F^\bullet(E)$ as follows. Set $F^i(E) := \ker(E\to E_D\to E_D/E^{i}_D)$. As $E^{i-1}_D\subset E^i_D$, the map $E\to E_D/E^i_D$ factors through $E_D/E^{i-1}_D$ and so $F^{i-1}(E)\subset F^i(E)$ as required.  Conversely, given $F^\bullet(E)$, we define $E^i_D:= F^i(E)/F^0(E).$ Therefore, a parabolic $\Omega$-parabolic sheaf on $C$ is equivalent to a parabolic Higgs bundle on $C$ in our definition.

    \end{rem}

     In order to consider the coarse moduli space for such objects, we will need to discuss the stability condition.
     Recall that in \cite[Section 1]{Maruyama-Yokogawa} and \cite[Section 1]{Yokogawa-compactification},  the parabolic Euler characteristic and the (reduced) parabolic Hilbert polynomial of $E_*:= (E, E_D^\bullet,\underline{\a}) $ are defined as 
    \begin{equation*}
        \para \chi({E_*}) = \chi(E )+\sum_{i=1}^l \a_i\chi(E^i_D/E^{i-1}_D) , \qquad \para P_{E_*}(t) = \frac{\para \chi(E_*(t))}{\rk(E)}
    \end{equation*}
    where $E^i_D/E^{i-1}_D$ is viewed as a torsion sheaf on $C$ in the expression and $E_*(t) = (E\otimes \O(t), E^\bullet_D\otimes \O(t),\ua).$  \footnote{In the original definition, $\para \chi({E_*}) := \chi(E(-D) )+\sum_{i=1}^l \a_i\chi(E^i_D/E^{i-1}_D)$, but the difference will not affect the stability condition in the case of curves. }

    \begin{definition}
    An irregular parabolic Higgs bundle $(E_*, \Phi) $ is said to be $\underline{\a}$-(semi)stable if for any nontrivial proper subbundle $0\subset F\subset E$ preserved by $\Phi$, we have 
    \begin{equation*}
        \para P_{F_*}(t) < \para P_{E_*}(t)  \quad \textrm{for } t\gg 0 \quad (\textrm{resp.  }\leq )
    \end{equation*}
    where $F_*= (F, F^\bullet_D, \underline{\a})$ is defined by the induced filtration $F^i_D= F_D\cap E^i_D$, for $i\leq l$, that is preserved by $\Phi_D$. 
        
    \end{definition}

    The work of Yokogawa \cite[Theorem 4.6]{Yokogawa-compactification} shows that there exists a coarse moduli space for $\underline{\a}$-stable irregular parabolic Higgs bundles of rank $r$, degree $d$ such that $\chi(E^i_D/E^{i-1}_D)=nm_i$. We will denote this moduli space by $\mparabun$. Moreover, one can define the coarse moduli space of $\underline{\alpha}$-stable irregular strongly parabolic Higgs bundles as a closed subscheme in $\mathcal{M}(C,D;r, d, \underline{\a}, \underline{m})$, which we denote by $\mathcal{M}_0(C,D;r, d, \underline{\a}, \underline{m}).$ 

    In order to fix the polar part $\Phi_D$ of the Higgs fields, we choose sections $\xi_1, \dots, \xi_l\in H^0(D, M_D)$ and denote by $\underline{\xi}=(\xi_1, \dots, \xi_l)$ the collection of such sections.  Following \cite{Diaconescu_2018}, we call a quadruple $(E ,E^\bullet_D, \Phi,\underline{\alpha})$ \textit{an irregular $\underline{\xi}$-parabolic Higgs bundle} if (1) the successive quotients $E^i_D/E^{i-1}_D$ of $E^\bullet_D$ are free $\O_D$-modules (hence all $E^i_D$ are also free) and (2) the induced morphism of $\O_D$-modules $\textrm{gr}_i\Phi_D:= \Phi_{D, i}: E^i_D/E^{i-1}_D\to E^i_D/E^{i-1}_D \otimes M_D$ satisfies the following condition
    \begin{equation}\label{xi-parabolic}
        \Phi_{D, i}= \mathrm{Id}_{E^i_D/E^{i-1}_D}\otimes \xi_i, \quad 1\leq i\leq l.
    \end{equation} 

    \begin{rem}
        The definition of irregular $\uxi$-parabolic Higgs bundle used here and \cite{Diaconescu_2018} is a direct analogue of the notion of unramified irregular singular $\bm{\nu}$-parabolic connection of parabolic depth $n$ introduced in the work of Inaba-Saito \cite[Definition 2.1]{inaba-saito}, where $\bm{\nu}=\uxi$ in our case. The following Theorem \ref{existence-of-coarse-moduli} and Proposition \ref{smoothness-of-det} are also the analogues of \cite[Theorem 2.1]{inaba-saito} and \cite[Proposition 2.1]{inaba-saito}, respectively.
    \end{rem}

    \begin{rem}
        Note that the condition that the successive quotients of $E^\bullet_D$ are free implies that there exists a filtration of vector spaces $E^\bullet_p: 0=E^0_p\subset E^1_p \subset \dots \subset E^{l-1}_p \subset E^l_p= E_p$ such that $E^\bullet_D = E^\bullet_p\otimes \O_D$. 
    \end{rem}

    More generally, there exists a relative moduli space of irregular $\uxi$-parabolic Higgs bundles on $C$ when $\uxi$ varies. 
    Let $\nld = \{ \uxi= (\xi_1,...,\xi_l)| \xi_i\in H^0(D, M_D) ,  \sum \res (\xi_i)= 0 \}$ be the parameter space of the polar parts $\uxi$. Define the moduli functor 
    $\frelativeh : \textrm{Sch}/\nld\to \textrm{Sets}$ as follows: for each $T\in \textrm{Sch}/\nld$ represented by $(\uxi)_T= ((\xi_1)_T,...,(\xi_l)_T)\in \nld(T)$, we have 
    \begin{center}
    $\frelativeh (T)=$
  \lrboxbrace{ } 
  {vector bundles $\m E$ on $C_T$ such that $\m E|_{C_t}$ is locally free of rank $r$ and degree $d$ for each geometric point $t\in T$, 
         Higgs fields $\Psi: \m E \to \m E\otimes  \Omega_{C_T/T}^1(D_T)$,
        quasi-parabolic structures $\m E_{D_T}^\bullet: 0 = \m E_{D_T}^0 \subset \m E_{D_T}^1\subset \dots \subset \m E^l_{D_T}=\m E_{D_T}$ such that $\Psi_{D_T}(\m E^i_{D_T})\subset \m E^i_{D_T}\otimes  \Omega^1_{C_T/T}(D_T)$ and 
        each $\m E^i_{D_T}/\m E^{i-1}_{D_T}$ is locally free of rank $m_i$ on $D_T$, $\gr_i(\Psi_{ D_T})-\Id_{\m E^i_{D_T}/\m E^{i-1}_{D_T}}\otimes (\xi_i)_T=0$, and $(\m E|_{C_t}, \m E^\bullet_{D_T}|_{D_t},\Psi|_{C_t}, \underline{\a})$ is $\underline{\a}$-stable for each geometric point $t\in T$.}$\bigg/\sim$
\end{center}
where two flat families are equivalent $(\m E, \m E^\bullet_{D_T},\Psi) \sim (\m E', \m E'^\bullet_{D_T},\Psi')$  if there exists a line bundle $L$ on $T$ such that $(\m E, \m E^\bullet_{D_T},\Psi)\cong (\m E'\otimes p_2^*L, \m E'^\bullet_{D\times T}\otimes p_2^*L,\Psi'\otimes p_2^*L)$ and $p_2:C_T=C\times T\to T$ is the projection.
    \begin{theorem}\label{existence-of-coarse-moduli}
        Fix the numerical data: $r\geq 1, d\in \Z$, $1>\a_1> ...> \a_l\geq 0$ where $\a_i\in \Q$, $m_1,...,m_l\in \Z_{\geq 0}$. There exists a relative coarse moduli scheme $P: \relativeh\to \nld$ of the moduli functor $\frelativeh$.    \end{theorem}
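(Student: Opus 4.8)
The plan is to construct $\relativeh$ as a relative GIT quotient over $\nld$, adapting Yokogawa's absolute construction \cite[Theorem 4.6]{Yokogawa-compactification} to the family in which $\uxi$ varies. The guiding observation is that the defining condition \eqref{xi-parabolic} is a closed condition depending algebraically on $\uxi$, so Yokogawa's entire GIT setup can be carried out relatively over $\nld$.

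First I would establish boundedness uniformly in $\uxi$. Since the numerical data $(r,d,\underline{\a},\underline{m})$ is fixed and $\underline{\a}$-semistability does not refer to $\uxi$ at all, Yokogawa's boundedness applies simultaneously to every $\underline{\a}$-semistable $\uxi$-parabolic Higgs bundle as $\uxi$ ranges over the finite-type scheme $\nld$. Hence there is an integer $N\gg 0$ such that $\m E(N)$ is globally generated with vanishing higher cohomology for every member of the family. Setting $V=\C^{\,h}$ with $h=\chi(\m E(N))=h^0(\m E(N))$, this rigidifies $\m E$ as a quotient $V\otimes \O_C\twoheadrightarrow \m E(N)$, and produces a locally closed embedding of the parameter data into a relative Quot scheme over $\nld$ (which, as $C$ and the numerical data are fixed, is the constant family $\mathrm{Quot}\times\nld$), together with a relative flag scheme encoding the quasi-parabolic structure $\m E^\bullet_{D_T}$ and a relative linear scheme $\underline{\mathrm{Hom}}(\m E,\m E\otimes \Omega^1_{C_T/T}(D_T))$ encoding the Higgs field $\Psi$.

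Next I would cut out the relevant locus. On this relative parameter scheme $R\to\nld$ I impose that $\Psi$ preserves the flag, that each $\m E^i_{D}/\m E^{i-1}_{D}$ is locally free of rank $m_i$, and that $\gr_i(\Psi_{D})=\Id\otimes\xi_i$, where $\xi_i$ is pulled back from the tautological section over $\nld$. Each is a closed condition, so together they define a closed subscheme $R^{\uxi}\subset R$ over $\nld$. The group $G=\mathrm{GL}(V)$ acts on $R^{\uxi}$ fibrewise over $\nld$, with its centre $\mathbb{G}_m$ producing, in families, exactly the equivalence by line bundles on the base that appears in the definition of $\frelativeh$. Yokogawa's analysis of the parabolic Hilbert polynomial shows that, for a suitable $G$-linearization chosen independently of $\uxi$, the GIT-(semi)stable points of $R^{\uxi}$ are precisely those corresponding to $\underline{\a}$-(semi)stable $\uxi$-parabolic Higgs bundles, and I would then form the relative good quotient $P:\relativeh = R^{\uxi,\mathrm{ss}}/\!\!/G\to\nld$.

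Finally, one checks that $P$ corepresents $\frelativeh$. Since the linearization is independent of $\uxi$ and semistability is an open condition which is constant along the fibres of $R^{\uxi}\to\nld$, the relative quotient is compatible with base change to each geometric point $\uxi\in\nld$, so the fibre of $P$ over $\uxi$ recovers Yokogawa's coarse moduli space $\muxi$ of $\underline{\a}$-stable $\uxi$-parabolic Higgs bundles; descending the universal Quot-quotient through the $G$-action then yields corepresentability of $\frelativeh$ by the standard argument. I expect the main obstacle to be precisely this relative step: one must verify that GIT semistability matches intrinsic $\underline{\a}$-stability uniformly over $\nld$ and that the good quotient commutes with the relevant base changes, so that $P$ has the expected fibres and genuinely corepresents the functor. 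This is where the full force of Yokogawa's parabolic Hilbert-polynomial estimates, combined with the relative GIT machinery employed by Inaba--Saito \cite{inaba-saito}, is needed.
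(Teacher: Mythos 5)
Your overall strategy --- relative GIT over $\nld$, cutting the $\uxi$-parabolic locus out of a parameter scheme and taking the quotient --- is the same as the paper's, but there is one concrete error and one place where you take on much more work than is needed. The error: you assert that the condition that each $\m E^i_{D}/\m E^{i-1}_{D}$ be locally free of rank $m_i$ is a closed condition. It is not; it is an \emph{open} condition, and this is precisely why $\relativeh$ ends up only \emph{locally closed} in $\mathcal{M}(C, D;r, d, \underline{\alpha}, \underline{m})\times \nld$. For $n>1$, a $T$-flat family of length-$nm_i$ quotient $\O_{D_T}$-modules can degenerate from free modules to non-free ones (e.g.\ the free module $k[z]/z^2$ degenerates to $k\oplus k$ in a flat family of $k[z]/z^2$-modules of length $2$), so the free locus is open and carries no natural closed-subscheme structure. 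The paper handles this by showing that the successive quotients $F^i(\m E)/F^{i-1}(\m E)$ are $T$-flat and invoking openness of local freeness \cite[Lemma 5.4]{newstead}. Your argument can be repaired by replacing ``closed'' with ``open'' for this condition --- a $G$-invariant open subscheme of the stable locus still maps to a locally closed subscheme of the quotient --- but as written the subscheme $R^{\uxi}$ you define does not exist.

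Second, the ``main obstacle'' you identify --- re-verifying that GIT semistability matches $\underline{\a}$-stability uniformly over $\nld$, redoing boundedness, and checking base-change compatibility of the good quotient --- evaporates in the paper's version of the argument. Rather than rebuilding the Quot/flag/Hom parameter scheme relatively over $\nld$, the paper reuses Yokogawa's already-constructed parameter scheme $R$ for parabolic $\Omega$-pairs, with its universal family and $PGL(V)$-action, for which the identification of GIT (semi)stability with parabolic (semi)stability is already established. Since neither $\underline{\a}$-stability nor the linearization involves $\uxi$ in any way, one only needs to restrict to the open subscheme $R'\subset R$ where the successive quotients are free, intersect with the closed condition \eqref{xi-parabolic} inside $R'\times \nld$ to obtain a $PGL(V)$-invariant subscheme $R''$, and pass to the induced quotient, which then sits as a locally closed subscheme of $\mathcal{M}(C, D;r, d, \underline{\alpha}, \underline{m})\times \nld$ over $\nld$ and inherits the coarse moduli property. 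No new stability estimates, boundedness arguments, or relative GIT machinery are required.
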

    \begin{proof}
          As explained in Remark \ref{Yokogawa's definition}, we can view an  irregular $\uxi$-parabolic Higgs bundle as a "parabolic $\Omega$-pair" in the sense of \cite{Yokogawa-compactification}, where $\Omega= K_C(D)$ in our case. The extra conditions imposed here are the freeness of $E^i_D/E^{i-1}_D$ and the condition \eqref{xi-parabolic} on the polar part of the Higgs fields.

    By definition, a family of parabolic $\Omega$-pairs consists of a triple $({\m E}, F^\bullet({\m E}), {\Psi})$ where ${\m E}$ is a vector bundle on $ C_T$, $F^\bullet({\m E}): \m E(-D_T)= F^0({\m E})\subset F^1({\m E}) \cdots \subset F^l({\m E})= {\m E}$ is a filtration of vector bundles such that ${\m E}/F^i({\m E})$ is flat over $T$ for $0\leq i\leq l$, and ${\Psi}: {\m E}\to {\m E} \otimes \Omega_{ C_T/T}^1( D_T)$ such that ${\Psi}(F^i({\m E}))\subset F^i({\m E})\otimes \Omega_{ C_T/T}^1(D_T)$. Since $\m E/F^i(\m E)$ is flat over $T$ for $0\leq i\leq l$, it follows from the short exact sequence $0\to F^i(\m E)/F^{i-1}(\m E)\to \m E/F^{i-1}(\m E)\to \m E/F^{i}(\m E)\to 0$ that the quotient $F^i(\m E)/F^{i-1}(\m E)$ is flat over $T$ for $1\leq i\leq l$.  Since $F^i(\m E)/F^{i-1}(\m E)\cong F^i(\m E)_{ D_T}/F^{i-1}(\m E)_{ D_T}$, the flatness implies that the subset 
    $\{t\in T| F^i(\m E)_{ D_t}/F^{i-1}(\m E)_{ D_t}\textrm{  is locally free}\}$ is open in $T$ \cite[Lemma 5.4]{newstead}. Therefore, the condition of the freeness of $E^i_D/E^{i-1}_D$ is an open condition. On the other hand, for each $T\in \textrm{Sch}/\nld$ represented by $(\uxi)_T= ((\xi_1)_T,...,(\xi_l)_T)\in \nld(T)$, the condition \eqref{xi-parabolic} becomes the vanishing of the homomorphisms $\gr_i({\Psi}_{C_T}|_{D_T})-\Id_{F^i(\m E)_{ D_T}/F^{i-1}(\m E)_{ D_T}}\otimes (\xi_i)_T.$ Then we can always find a closed $\nld$-subscheme $T'\subset T$ with the corresponding universal property (see \cite[Corollary 2.3]{Yokogawa-compactification} for example). Therefore, the condition \eqref{xi-parabolic} on the polar part is a closed condition.      
   
   In \cite[Theorem 4.6]{Yokogawa-compactification}, the coarse moduli scheme $ \mathcal{M}(C, D;r, d, \underline{\alpha}, \underline{m})$ is constructed as a GIT quotient of a parameter scheme $R$ with a group action of $PGL(V)$ for some $k$-vector space $V$. There is a universal family of stable parabolic $\Omega$-pairs $(\m G,F^\bullet(\m G), \bm{\Psi})$ over $C_R$ and a surjection $V\otimes_k \O_{C_R} \twoheadrightarrow \m G$. Consider $\mathcal{M}(C, D;r, d, \underline{\alpha}, \underline{m})\times \nld$ as a scheme over $\nld$. As explained above, we can first restrict to an open subscheme $R'$ of $R$ defined by the freeness condition. Then there is a closed subscheme $R''\subset R'\times \nld$ defined by condition \eqref{xi-parabolic}.  It is clear that $R''$ is $PGL(V)$-invariant as the group only acts on the surjections $V\otimes \O_{ C}\twoheadrightarrow  E$ in the parameter scheme. Hence, the GIT quotient of $R''$ by $PGL(V)$ will be a locally closed subscheme $\relativeh$ (over $\nld$) of $\mathcal{M}(C, D;r, d, \underline{\alpha}, \underline{m})\times \nld$ satisfying the property of a coarse moduli scheme for $\frelativeh$. 
    \end{proof}

    \begin{rem}
        In particular,  the fiber $P^{-1}(\uxi)$ over a fixed polar part $\uxi\in \nld$ will be the coarse moduli scheme for stable irregular $\uxi$-parabolic Higgs bundles.  

    \end{rem}
        
    \begin{prop}[Regular full flag case]\label{prop:isomodulispace}
    In the regular full-flag case, equivalently $D=p$ and $\underline{m}= (1,..., 1)$, we have $\relativeh \cong \mathcal{M}(C, D;r, d, \underline{\alpha}, \underline{m})$. 
    \end{prop}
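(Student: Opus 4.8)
The plan is to argue directly from the explicit GIT construction of $\relativeh$ given in the proof of Theorem~\ref{existence-of-coarse-moduli}, and to show that in the regular full-flag case the $\uxi$-condition \eqref{xi-parabolic} cuts out a \emph{graph} rather than a proper subvariety, so that passing to quotients loses nothing. Recall that $\relativeh = R''/\!\!/PGL(V)$, where $R''$ is the closed subscheme of $R'\times\nld$ defined by \eqref{xi-parabolic}, and $R'\subset R$ is the open locus of the Yokogawa parameter scheme where the successive quotients of the universal flag are free. Since here $D=p$ is a reduced point, $\O_D=\C$ is a field and every successive quotient $F^i(\mathcal{G})_{D_R}/F^{i-1}(\mathcal{G})_{D_R}$ is automatically free of rank $m_i=1$; hence the freeness condition is vacuous and $R'=R$. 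Everything therefore reduces to understanding $R''\subset R\times\nld$.

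First I would package the polar data of the universal family $(\mathcal{G},F^\bullet(\mathcal{G}),\bm{\Psi})$ over $C_R$ into a single morphism. For $1\le i\le l$ the graded piece $L_i := F^i(\mathcal{G})_{D_R}/F^{i-1}(\mathcal{G})_{D_R}$ is a line bundle on $D_R\cong R$, so $\gr_i(\bm{\Psi}_{D_R})$ is a global section of $\mathcal{E}nd(L_i)\otimes M_{D_R}\cong M_p\otimes_\C\O_R$, i.e.\ a morphism $\eta_i\colon R\to H^0(D,M_D)$. Setting $\eta=(\eta_1,\dots,\eta_l)\colon R\to \prod_i H^0(D,M_D)$, I note that $\eta$ is $PGL(V)$-invariant, since the group acts only on the framing $V\otimes\O_C\twoheadrightarrow\mathcal{G}$ and leaves the underlying graded Higgs data untouched.

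The key input, and the step I expect to require the most care, is that $\eta$ factors through $\nld$, i.e.\ $\sum_i\res(\eta_i)\equiv 0$. Here I would invoke the residue theorem fiberwise: over a geometric point $t\in R$ the trace $\Tr(\bm{\Psi}|_{C_t})$ is a global section of $K_C(p)$, that is a meromorphic one-form on the projective curve $C$ with at worst a simple pole at $p$, so the sum of its residues---which reduces to the single residue at $p$---vanishes. Since $\res_p(\bm{\Psi}|_{C_t})$ preserves the flag and acts on $L_i|_t$ by the scalar $\res(\eta_i(t))$, we obtain $\sum_i\res(\eta_i(t))=\Tr\big(\res_p(\bm{\Psi}|_{C_t})\big)=\res_p\big(\Tr(\bm{\Psi}|_{C_t})\big)=0$ for every $t$, so $\eta$ lands in $\nld$. (For $g=0$ this is automatic, as $H^0(C,K_C(p))=0$.)

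Finally I would identify $R''$ with the graph of $\eta$. By definition $R''$ is the vanishing locus inside $R\times\nld$ of $\gr_i(\bm{\Psi}_{D_R})-\Id_{L_i}\otimes(\xi_i)$, which in the present notation is the condition $\eta_i\circ\mathrm{pr}_1=\xi_i\circ\mathrm{pr}_2$ for all $i$; hence $R''=\{(x,\uxi): \eta(x)=\uxi\}$ is exactly the graph of $\eta\colon R\to\nld$. The first projection is then a $PGL(V)$-equivariant isomorphism $R''\xrightarrow{\sim}R$, and taking GIT quotients yields $\relativeh=R''/\!\!/PGL(V)\cong R/\!\!/PGL(V)=\mparabun$. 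The one point to watch throughout is the $PGL(V)$-equivariance of the graph identification, which is guaranteed by the invariance of $\eta$ established above; stability is preserved since the underlying parabolic Higgs bundle is unchanged.
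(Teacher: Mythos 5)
Your proposal is correct, and it rests on the same core observation as the paper's proof: in the regular full-flag case the freeness condition is vacuous ($\O_D\cong k$), every graded piece of the flag is a line, so $\gr_i\Phi_p$ is automatically a scalar $\xi_i$, and hence the parabolic Higgs bundle determines $\uxi$ uniquely. The difference is in execution. The paper argues at the level of objects: it notes these facts and then simply declares "it is clear that" $\relativeh\cong\mparabun$, leaving implicit both how the identification descends to coarse moduli spaces and, more importantly, why the induced tuple $\uxi$ satisfies the constraint $\sum_i\res(\xi_i)=0$ required for membership in $\nld$. You instead descend into the GIT construction of Theorem~\ref{existence-of-coarse-moduli}, package the polar data into a $PGL(V)$-invariant morphism $\eta:R\to\prod_i H^0(D,M_D)$, and identify $R''$ with the graph of $\eta$, so that $R''\cong R$ equivariantly and the quotients agree. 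What your route buys is precisely the two points the paper glosses over: the residue-theorem verification that $\eta$ lands in $\nld$ (the trace $\Tr(\Phi)$ is a section of $K_C(p)$ with a single simple pole, so $\sum_i\res(\xi_i)=\res_p(\Tr\Phi)=0$), which is genuinely needed for the statement to parse, and an explicit mechanism for passing from the functor-level identification to the coarse moduli schemes.

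One technical refinement you may want to make: you invoke the residue theorem fiberwise over geometric points of $R$, which only shows that $\sum_i\res\circ\eta_i$ vanishes on $R_{\mathrm{red}}$; to identify $R''$ with the graph \emph{scheme-theoretically} (without assuming $R$ reduced) you should use the relative residue theorem, namely that for a section $\omega$ of $\Omega^1_{C_R/R}(D_R)$ the sum of residues vanishes in $\O(R)$. This follows from the exactness of $H^0(\Omega^1_{C_R/R}(D_R))\to H^0(Q)\to H^1(\Omega^1_{C_R/R})$, where $Q$ is the polar quotient, together with the compatibility of the relative duality trace with residues; so this is a standard fix rather than a gap.
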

    \begin{proof}
        In this case, the freeness condition is automatic since $\O_D\cong k$, so every $\O_D$-module must be free. A quasi-parabolic structure is a filtration of vector spaces $0=E^0_p\subset E^1_p\subset \cdots E^l_p= E|_p$ and each $E^i_p/E^{i-1}_p$ is one dimensional. So for each $1\leq i\leq l$, the induced map $\Phi_{p,i}= \gr_i(\Phi_p): E^i_p/E^{i-1}_p\to E^i_p/E^{i-1}_p\otimes M_p$ must be a scalar multiplication i.e. $\Phi_{p,i} = \Id\otimes \xi_i$ for some unique $\xi_i\in H^0(p,M_p)$. Therefore, a regular parabolic Higgs bundle is a regular $\uxi$-parabolic Higgs bundle for a unique $\uxi\in \m N(l,D).$  Then it is clear that $\relativeh\cong \mathcal{M}(C, D;r, d, \underline{\alpha}, \underline{m})$. 
    \end{proof}

\begin{rem}\label{rem:comparison fails}
    In general, there is a morphism $ G:\relativeh  \to \mathcal{M}(C, D;r, d, \underline{\alpha}, \underline{m})$ which is given by the composition 
    \[
    \relativeh \hookrightarrow \mathcal{M}(C, D;r, d, \underline{\alpha}, \underline{m})\times \nld \to \mathcal{M}(C, D;r, d, \underline{\alpha}, \underline{m}).
    \]
    However, this morphism $G$ is not surjective: when $n>1$, there exists a filtration $E_D^\bullet$ whose successive quotients are not necessarily free. This implies that $G$ is not necessarily surjective. See the discussion after Corollary \ref{cor:generator reg par} as well. When $\underline{m}$ does not represent the full-flag type, there exists a Higgs field $\Phi$ which is not necessarily diagonal when restricted to $D$. 
\end{rem}

    Next, we study the smoothness of the relative moduli space $P:\relativeh \to \nld$. Note that when $r=1$, the quasi-parabolic structure on $E_D$ is trivial, so we can write the relative coarse moduli space as $\relativeho\to \nldo$ which consists of a line bundle $L$, $\Phi\in H^0(C, M)$ such that $\Phi_D = \Id_ {L_D} \otimes \xi $ for some $\xi\in H^0(D,M_D)$. There is a determinant morphism $\relativeh \to \relativeho$ which sends $(E,E_D^\bullet,\Phi) $ to $(\det(E), \det(\Phi))$ where $\det(\Phi) := (\Phi\wedge \Id \wedge\dots \wedge\Id) +(\Id\wedge \Phi\wedge \dots \wedge\Id)+ \dots + (\Id\wedge \Id \wedge \dots \wedge\Phi)$. Define the trace map $\Tr: \nld \to \nldo$ by $\Tr(\uxi) = \sum_{i=1}^l \xi_i$. Then it is easy to check that $P(\det(E),\det(\Phi)) = \Tr(P(E,E^\bullet_D,\Phi))$. So the two maps induce $\det: \relativeh \to \relativeho\times _{\nldo} \nld.$

        \begin{prop}\label{smoothness-of-det}
            The morphism $\det: \relativeh \to \relativeho\times _{\nldo} \nld$ is smooth. 
        \end{prop}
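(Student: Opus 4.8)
The plan is to verify smoothness through formal smoothness (the morphism is of finite presentation automatically from the GIT construction in Theorem \ref{existence-of-coarse-moduli}), i.e.\ via the infinitesimal lifting criterion. Concretely, for a small extension $A' \twoheadrightarrow A$ of local Artinian $\C$-algebras with square-zero kernel $I$, and a family $(\dE, \dE^\bullet_{D}, \Psi)$ over $A$ whose image under $\det$ is equipped with a lift to $A'$ (that is, a fixed lift of the determinant datum and of the polar part $\uxi$), I must produce a lift of $(\dE, \dE^\bullet_{D}, \Psi)$ itself over $A'$ inducing the prescribed determinant and polar data. The obstruction to such a lift will lie in a relative obstruction space, and smoothness follows once that space is shown to vanish.

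First I would set up the relative deformation–obstruction theory. Following the standard pattern for Higgs-type moduli and its parabolic refinement in Inaba--Saito, the deformation theory of $P : \relativeh \to \nld$ at a point $(E, E^\bullet_D, \Phi, \uxi)$ is governed by the hypercohomology of a two-term complex
\[
\mathcal{C}^\bullet \;=\; \big[\, \mathcal{P}ar\mathcal{E}nd(E) \xrightarrow{\;[\Phi,\,-]\;} \mathcal{P}ar\mathcal{E}nd(E)\otimes M \,\big],
\]
where $\mathcal{P}ar\mathcal{E}nd(E)$ is the sheaf of endomorphisms preserving the parabolic flag $E^\bullet_D$, and where the $\xi$-condition \eqref{xi-parabolic} is encoded by modifying the complex along $D$ so that the graded-diagonal blocks of the target vanish (i.e.\ the fixed polar part $\Id\otimes\xi_i$ is preserved). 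The relative tangent space is $\mathbb{H}^1(\mathcal{C}^\bullet)$ and obstructions lie in $\mathbb{H}^2(\mathcal{C}^\bullet)$.

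The determinant morphism induces the trace of the endomorphism complex, $\Tr : \mathcal{C}^\bullet \to \mathcal{C}^\bullet_{\det}$, where $\mathcal{C}^\bullet_{\det}$ is the rank-one complex governing $\relativeho$. Splitting off the trace yields a trace-free subcomplex $\mathcal{C}^\bullet_0$ built from $\mathcal{P}ar\mathcal{E}nd_0(E)$, and the relative deformation theory of $\det$ over $\relativeho \times_{\nldo}\nld$ is precisely governed by $\mathcal{C}^\bullet_0$ (fixing the determinant datum and, via the $\nld$-factor, the polar part $\uxi$). Thus it suffices to prove the vanishing of $\mathbb{H}^2(\mathcal{C}^\bullet_0)$. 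Since $C$ is a curve and $M = K_C(D)$, Serre duality identifies $\mathbb{H}^2(\mathcal{C}^\bullet_0)$ with the dual of the $\mathbb{H}^0$ of an appropriately twisted dual complex, which computes the trace-free, $\Phi$-commuting (strongly) parabolic endomorphisms of $E$. A standard argument using $\ua$-stability shows that any $\Phi$-commuting parabolic endomorphism is a scalar, so its trace-free part is zero. Hence $\mathbb{H}^2(\mathcal{C}^\bullet_0) = 0$, giving the surjectivity of relative lifts and the smoothness of $\det$.

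The main obstacle, I expect, is the careful bookkeeping at the divisor $D$: one must pin down the correct parabolic endomorphism sheaves together with the precise modification encoding \eqref{xi-parabolic}, and verify that the twisted dual complex pairs these sheaves against one another so that the duality $\mathbb{H}^2(\mathcal{C}^\bullet_0)^\vee \cong \mathbb{H}^0(\text{dual})$ genuinely holds, the $(-D)$-twist from $M = K_C(D)$ supplying exactly the shift that matches the parabolic and strongly parabolic conventions. A secondary point is to confirm that the trace map really splits the relative theory over the fiber product $\relativeho \times_{\nldo}\nld$ — in particular that the deformations of $\uxi$ with $\Tr(\uxi)$ fixed are already accounted for on the base — so that $\mathbb{H}^2(\mathcal{C}^\bullet_0)$ is exactly the relative obstruction. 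Once the parabolic conventions are fixed, the stability step identifying $\Phi$-commuting endomorphisms with scalars is routine.
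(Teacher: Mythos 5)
Your proposal takes essentially the same route as the paper's proof: formal smoothness via the infinitesimal lifting criterion, with the relative obstruction placed in $\mathbb{H}^2$ of the trace-free two-term complex $PEnd_0(\overline{E}^\bullet)\xrightarrow{[\overline{\Phi},\cdot]} SPEnd_0(\overline{E}^\bullet)\otimes K_C(D)$ (your ``vanishing graded-diagonal blocks'' modification is exactly the strongly parabolic condition on the target), which is then killed by Serre duality combined with Yokogawa's duality $SPEnd_0(\overline{E}^\bullet)^\vee \cong PEnd_0(\overline{E}^\bullet)\otimes \O_C(D)$ and the $\ua$-stability argument identifying $\Phi$-commuting parabolic endomorphisms with scalars. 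The only real difference is presentational: the paper constructs the obstruction class explicitly as a \v{C}ech cocycle pair $(\{u_{\a\b\gm}\},\{v_{\a\b}\})$ built from local liftings of the Higgs field in block upper-triangular form, whereas you invoke the standard Inaba--Saito-style identification of the relative deformation--obstruction theory with this complex.
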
    
        
        \begin{proof}
        The proof here is essentially a modification of \cite[Proposition 2.1]{inaba-saito} to the Higgs bundles case. It suffices to show that the morphism of moduli functors $\det: \frelativeh \to \relativeho\times _{\nldo} \nld$ is formally smooth. 
        Let $A$ be an Artinian local $k$-algebra with maximal ideal $\f m_A$ and residue field $k=A/\f m_A.$ Let $0\to I\to A'\to A\to 0$ be a small extension i.e. $\f m_{A'}I=0.$ We shall show that a lift $\Spec(A')\to \frelativeh$ always exists in each of the following diagram
        \begin{equation}\label{lifting-diagram}
            \begin{tikzcd}
                \Spec(A)\arrow[r,"q"]\arrow[d] & \frelativeh\arrow[d,"\det"]\\ 
                \Spec(A')\arrow[ru, dotted] \arrow[r,"p"]& \relativeho\times _{\nldo} \nld
            \end{tikzcd}
        \end{equation}
Let $((L,\Phi^L), \uxi)\in \relativeho \times _{\nldo} \nld(A')$ corresponding to the morphism $p$ such that $\Phi^{L} = \Id_{L|_D}\otimes (\sum^l_{i=1}\xi_i)$. Let $(\dE,\dE_{D_A}^\bullet, \Phi)\in \frelativeh(A)$ corresponding to $q$ such that $(\det(\dE) ,\det(\Phi))\cong (L,\Phi^L) \otimes_{A'} A$ and $P(E, E^\bullet_{D_A},\Phi) = \uxi \otimes_{A'}A$. Denote by $(\overline{E},\overline{E}_{D}^\bullet, \overline{\Phi}):=(\dE,\dE_{D_A}^\bullet, \Phi)\otimes k$.

        Choose a \v{C}ech cover $\{U_\a\}$ of $C$  which trivializes $\dE$ i.e. $\dE|_{U_\a\otimes A} \cong \O^{\oplus r}_{U_\a\otimes A}$ over each open subset $U_\a\otimes A\subset C_A$. The strategy is to first construct a local object over each $U_\a\otimes A'$ lifting $((L, \Phi^L),\uxi)$ and then study the obstruction for the existence of a global object. First, we take a free $\O_{U_\a\otimes A'}$-module $\dE_\a$ with isomorphisms $\varphi_\a: \det(\dE_\a)\xrightarrow{\sim} L|_{U_\a\otimes A'}$ and $\phi_\a:\dE_\a \otimes_{A'} A\xrightarrow{\sim} \dE|_{U_a\otimes A}$ such that $\varphi_\a\otimes A = \det(\phi_\a).$ If $p\in U_\a$, we can choose a basis $( e_j)_j$ of $\dE_\a$ (also a basis for $\dE_\a\otimes_{A'}A)$ such that $\dE^\bullet_{D_A}$ corresponds, via $\phi_\a$, to the standard filtration associated to the basis. Then the basis of $\dE_\a$ determines a quasi-parabolic structure  $(\dE_{\a})^\bullet_{D_{A'}}$. With respect to $(e_j)_j$, $\Phi_\a:= \phi_\a^{-1}\circ \Phi \circ \phi_{\a}$ is a matrix valued in $\O(U_\a\otimes A)$ such that
        \begin{equation*}
            \Phi_\a|_{D_A} = \begin{pmatrix}
                (\xi_1 \otimes _{A'}A)\otimes \Id_{\O_{D_A}^{\oplus m_1}} &\ast &\cdots &\ast\\
                0&(\xi_2\otimes _{A'}A)\otimes \Id_{\O_{D_A}^{\oplus m_2}} &\cdots &\vdots\\
                \vdots&\vdots&\ddots&\ast\\
                0&0&\cdots& (\xi_l\otimes _{A'}A)\otimes \Id_{\O_{D_A}^{\oplus m_l}}
            \end{pmatrix} 
        \end{equation*}
        Since each entry is an element of $\O(U_\a\otimes A)$, we can find a lift $\Phi'_\a$ of the matrix $\Phi_\a$ which is a matrix valued in $\O(U_\a\otimes A')$ such that 
        \begin{equation*}
            \Phi'_\a|_{D_{A'}} =\begin{pmatrix}
                \xi_1\otimes \Id_{\O_{D_{A'}}^{\oplus m_1}} &\ast &\cdots &\ast\\
                0&\xi_2\otimes \Id_{\O_{D_{A'}}^{\oplus m_2}} &\cdots &\vdots\\
                \vdots&\vdots&\ddots&\ast\\
                0&0&\cdots& \xi_l\otimes \Id_{\O_{D_{A'}}^{\oplus m_l}}
            \end{pmatrix} 
        \end{equation*}
        and such that $\det(\Phi'_\a)= (\varphi_\a \otimes \Id)^{-1}\circ \Phi^L \circ \varphi_\a$. Therefore, we get a local irregular $\uxi$-parabolic Higgs bundle $(\dE_\a, (\dE_\a)^\bullet_{D_{A'}}, {\Phi_\a})$ over $U_\a\otimes A'$. If $p\not\in U_{\a}$, then we take a lift $\Phi'_\a$ of $\Phi_\a$ valued in $\O(U_\a\otimes A')$ such that  $\det(\Phi'_A)= (\varphi_\a \otimes \Id)\circ \Phi^L \circ \varphi_\a^{-1}$ and a quasi-parabolic structure is not needed for this open subset. 

        Then we shall argue that the obstruction to glue the local objects $(\dE_\a, (\dE_\a)^\bullet_{D_{A'}},\Phi_\a)$ is given by a class in $\bb{H}^2$ of the following complex:
        \begin{equation*}
            \m D^\bullet_0: \quad 0\to PEnd_0(\overline{E}^\bullet)\xrightarrow{[\overline{\Phi},\cdot ]} SPEnd_0(\overline{E}^\bullet)\otimes K_C(D)\to 0
        \end{equation*}
        where 
        \begin{align*}
            PEnd_0(\overline{E}^\bullet)&= \{ f\in \m End(\overline{E})|\quad  f|_{D}(\overline{E}^i_{D})\subset (\overline{E}^i_{D}), \Tr(f)=0, \quad \forall i \}\\
            SPEnd_0(\overline{E}^\bullet)\otimes K_C(D)&=  \{ f\in \m End(\overline{E})|\quad  f|_{D}(\overline{E}^i_{D})\subset (\overline{E}^{i-1}_{D})\otimes M_D, \Tr(f)=0, \quad \forall i \}
        \end{align*}

        Denote by $U_{\a\b}:= U_\a\cap U_\b$ and $U_{\a\b\gm}:= U_\a\cap U_\b\cap U_\gm$. 
        For each pair of $\a,\b$, we choose a lift $\theta_{\b\a}: \dE_\a|_{U_{\a\b}}\xrightarrow{\sim} \dE_\b|_{U_{\a\b}}$ of the transition function $\phi_\b^{-1}\circ \phi_\a$, i.e. $\theta_{\b\a}\otimes A= \phi_\b^{-1}\circ \phi_\a$, such that $\varphi_\b\circ \det(\theta_{\a\b} )=  \varphi_\a$. Then we define
        \begin{align*}
            u_{\a\b\gamma} &:= \phi_\a\circ( \theta_{\gm\a}^{-1}\circ\theta _{\gm\b}\circ \theta_{\b\a} -\Id ) \circ \phi^{-1}_{\a} ; \\
            v_{\a\b}&:= \phi_\a\circ (\Phi'_{\a}-  \theta_{\a\b}^{-1} \circ \Phi'_{\b} \circ \theta_{\a\b})\circ \phi^{-1}_\a.
        \end{align*}
        One can check that $u_{\a\b\gm}\in \Gamma(U_{\a\b\gm}, PEnd_0(\overline{E}^\bullet))\otimes I$ because $u_{\a\b\gamma}$ is an endomorphism of parabolic bundles $(\dE|_{U_{\a\b\gm}\otimes A'}, \dE_{D_{A'}}^\bullet)$ with fixed determinant whose restriction to $A$ vanishes. Similarly, we have $v_{\a\b}\in \Gamma(U_{\a\b}, SPEnd_0(\overline{E}^\bullet)\otimes M) \otimes I.$ Hence, we have
        \begin{equation*}
            \{u_{\a\b\gm}\} \in C^2( \{U_{\a}\}, PEnd_0(\overline{E}^\bullet))\otimes I, \quad \{v_{\a\b}\} \in C^1( \{U_{\a}\}, SPEnd_0(\overline{E}^\bullet))\otimes I. 
        \end{equation*}
        It can be checked that $\{u_{\a\b\gm}\}$ and $\{v_{\a\b}\}$ uniquely define an obstruction class $\omega(\{u_{\a\b\gm}\}, \{v_{\a\b}\})\in \bb{H}^2(\m D^\bullet_0)$. Moreover, it is clear from the construction that $\omega(\{u_{\a\b\gm}\}, \{v_{\a\b}\})$  vanishes if and only if the local objects $\{(\dE_\a, (\dE_\a)^\bullet _{D_{A'}}, \Phi_\a)\}$ can be glued to $(\td{\dE}, \td{\dE}^\bullet _{D_{A'}}, \td{\Phi})\in \frelativeh (A')$ such that $\det(\td{\dE}, \td{\dE}^\bullet _{D_{A'}}, \td{\Phi})= ((L,\Phi^L),\uxi)$. Therefore, the lifting of $p$ in the diagram \ref{lifting-diagram} is equivalent to the vanishing of $\omega(\{u_{\a\b\gm}\}, \{v_{\a\b}\}). $

        By Serre duality, $\bb{H}^2(\m D_0^\bullet)\cong \bb{H}^0(\check{\m D}_0^\bullet)^\vee$ where $\check{\m D}^\bullet_0$ is the Serre dual of $\m D^\bullet_0$
        \begin{equation*}
            \check{\m D}^\bullet_0: SPEnd_0(\overline{E}^\bullet)^\vee \otimes \O_C(-D)\to  PEnd_0(\overline{E}^\bullet)^\vee \otimes K_C.
        \end{equation*}
        Since $\overline{E}_D^\bullet$ is assumed to have free successive quotients, we have the following duality \cite[Proposition 3.7]{Yokogawa-infinitesimal}
        \begin{equation*}
            SPEnd_0(\overline{E}^\bullet)^\vee \cong PEnd_0(\overline{E}^\bullet)\otimes O_C(D)
        \end{equation*}
        which implies that $\m D_0^\bullet \cong \check{\m D}^\bullet_0$ and hence $\bb{H}^2(\m D^\bullet_0)\cong \bb{H}^0(\m D^\bullet_0)^\vee$. Since $\bb{H}^0(\m D^\bullet_0)= \ker(H^0(PEnd_0(\overline{E}^\bullet))\to H^0(SPEnd_0(\overline{E}^\bullet)))$ and $H^0(PEnd_0(\overline{E}^\bullet))=0$ for an $\a$-stable parabolic Higgs bundle $(\overline{E}^\bullet , \Phi)$. Therefore, we conclude that $\bb{H}^2(\m D^\bullet_0)=0$ and the morphism $\det$ is formally smooth. 
        \end{proof}
            
        \begin{cor}\label{smoothness-of-P}
            The composition $\relativeh \to \relativeho\times _{\nldo} \nld \to \nld $ is smooth. In particular, the fiber $P^{-1}(\uxi)$ is smooth for any $\uxi\in \nld$. 
        \end{cor}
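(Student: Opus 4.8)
The plan is to recognize the composition in the statement as the structure morphism $P$ itself, and then to deduce its smoothness by combining Proposition~\ref{smoothness-of-det} with the standard stability of smoothness under base change and composition. First I would unwind the definitions of $\det$ and of the projection $\relativeho\times_{\nldo}\nld\to\nld$: the former sends $(E,E^\bullet_D,\Phi)$ to $((\det E,\det\Phi),\,P(E,E^\bullet_D,\Phi))$ and the latter records the $\nld$-factor, so their composite is exactly $P$. Since $\det$ is smooth by Proposition~\ref{smoothness-of-det}, it then suffices to show that the projection $\pi:\relativeho\times_{\nldo}\nld\to\nld$ is smooth; the smoothness of $P$ will follow because it is a composite of smooth morphisms.

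For $\pi$, the key observation is that it is the base change of the structure morphism $P_0:\relativeho\to\nldo$ along the trace map $\Tr:\nld\to\nldo$. As smoothness is preserved under base change, I would reduce to proving that $P_0$ is smooth. This is the rank-one situation, which is entirely explicit: with $r=1$ the quasi-parabolic structure is trivial, a point of $\relativeho$ is a pair $(L,\Phi)$ with $L\in\Pic^d(C)$ and $\Phi\in\Hom(L,L\otimes M)=H^0(C,K_C(D))$, and $P_0$ reads off $\Phi|_D\in H^0(D,M_D)$. Thus $\relativeho\cong\Pic^d(C)\times H^0(C,K_C(D))$ is smooth, and $P_0$ factors as the projection onto the second factor followed by the restriction map $\rho:H^0(C,K_C(D))\to H^0(D,M_D)$.

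The one genuinely geometric input is the analysis of $\rho$. Using the short exact sequence $0\to K_C\to K_C(D)\to K_C(D)|_D\to 0$, together with the identification of the connecting homomorphism $H^0(K_C(D)|_D)\to H^1(K_C)\cong\C$ with the sum-of-residues map, I get that $\rho$ is a linear surjection onto $\ker(\res)=\nldo$; in particular the image of $P_0$ really is $\nldo$, consistent with the residue theorem. A surjective linear map of vector spaces is a smooth morphism, so $P_0$ is smooth, being the composite of the projection off the (smooth) factor $\Pic^d(C)$ with a smooth linear surjection. Base change then gives that $\pi$ is smooth, and composing with $\det$ shows $P$ is smooth. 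Finally, the \emph{in particular} clause is automatic: the geometric fibers of a smooth morphism are smooth, so $P^{-1}(\uxi)$ is smooth for every $\uxi\in\nld$.

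I do not expect a serious obstacle here, since the hard infinitesimal work was already carried out in Proposition~\ref{smoothness-of-det}. The only points demanding care are bookkeeping ones in the rank-one base case: verifying that the coarse space $\relativeho$ is the naive product $\Pic^d(C)\times H^0(C,K_C(D))$ rather than a twist of it, and matching the residue condition defining $\nldo$ precisely with the image of $\rho$ via the residue theorem. Once these identifications are in hand, the smoothness of $P$ is purely formal.
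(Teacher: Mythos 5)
Your proof is correct and follows essentially the same route as the paper: smoothness of $\det$ (Proposition~\ref{smoothness-of-det}), combined with stability of smoothness under base change along $\Tr$ and under composition, reduces everything to smoothness of the rank-one structure morphism $\relativeho\to\nldo$. The only difference is cosmetic and lies in that last step: the paper factors $\relativeho\to\nldo$ as an affine bundle over $\Pic^d(C)\times\nldo$ followed by the projection, whereas you identify $\relativeho\cong\Pic^d(C)\times H^0(C,K_C(D))$ and verify via the residue exact sequence that restriction to $D$ surjects onto $\nldo$ --- a point the paper leaves implicit in its ``affine bundle'' claim, so your version is, if anything, slightly more complete.
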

        \begin{proof}
            Note that $\relativeho \to \nldo $ can be written as the composition $\relativeho \xrightarrow{f_1} \Pic^d(C) \times \nldo \xrightarrow{f_2} \nldo$ where $f_1$ is an affine bundle with fibers $H^0(C, K_C(D))$ parametrizing $\Phi$ and $f_2$ is the projection. Both $f_1,f_2$ are clearly smooth morphisms, so $\relativeho\to \nldo$ is smooth as well. Combining with Proposition \ref{smoothness-of-det}, it follows that the composition $\relativeh \to \nld$ is smooth. 
        \end{proof}

\subsection{Spectral correspondence}\label{sec:spectral correspondence}
        As mentioned in the introduction, the key idea to the study of the cohomology of the moduli space of stable $\uxi$-irregular parabolic Higgs bundles is to realize it as a moduli of sheaves on a holomorphic symplectic surface with certain stability condition via the spectral correspondence of \cite{Kontsevich-Soibelman} and \cite{Diaconescu_2018}. 
        
        First we recall the construction of the holomorphic symplectic surface by following \cite{Diaconescu_2018}. By abusing notation, we will also write $M$ as the total space of the twisted canonical line bundle $K_C(D)$. Recall that the polar parts of the Higgs fields are fixed by a choice of $\underline{\xi}= (\xi_1,...,\xi_l)$ with $\xi_i\in H^0(D, M_D)$, which determines a set of divisors $\delta_i\subset M_D$ for $1\leq i\leq l.$ We will call the choice of $\uxi$ \textit{generic} if the leading terms of $\xi_i$ are all distinct. More precisely, if we write $\xi_i= \left(\lambda_{i,n} + \lambda_{i,n-1}z + ...+ \lambda_{i,1}z^{n-1} \right)\frac{dz}{z^n}$ where $\lambda_{i,k}\in \C$, then $\lambda_{i,n}\neq \lambda_{i',n}$ for $i\neq i'$. In this subsection, we will assume that the choice of $\uxi$ is generic. Then we construct a quasi-projective surface $\suxi $ as follows:
        \begin{enumerate}
            \item Let $\f p_{1,i}$ be the intersection point of $\delta_i$ and the reduced fiber $M_p$. The points $\f p_{1,i}$ are distinct points in $M$ under the genericity assumption. First, we simultaneously blow up the surface $M$ at the (reduced) points $\f p_{1,i}$. Denote the resulting surface by $M_1$ and the exceptional divisors by $\Xi_{1,i}$ for each $1\leq i\leq l.$
            \item Suppose $n\geq 2$. For each $i$, let $\f p_{2,i}$ be the intersection point of the strict transform of $\delta_i$ and the exceptional divisor $\Xi_{1,i}$. Then we simultaneously blow up the surface $M_1$ at the (reduced) points $\f p_{2,i}$.  This simultaneous blow-up procedure is then repeated $n-2$ times. The resulting blown-up surface will be denoted by $T_\uxi$ and it contains the exceptional divisors $\Xi_{n,i}$ and the strict transform of the divisors (also denoted by) $\Xi_{a,i}$ for $1\leq a\leq n-1$ and $1\leq i\leq l.$

            \item Let $f$ be the strict transform of the fiber $M_p\subset M$ in $T_\uxi$. Then we obtain the surface $S_\uxi= T_\uxi\setminus (f+\sum_{i=1}^l \sum_{a=1}^{n-1}  \Xi_{a,i}).$
        \end{enumerate}
                Let $\overline{M}=\P(M\oplus \O)$ be the projective completion of $M$. We can apply the same blow-up procedure on $\overline{M}$ and get a projective surface $Z_\uxi$. Then $S_\uxi\subset T_{\uxi}\subset Z_\uxi$. 

        \begin{prop}
            The surface $S_\xi$ is holomorphic symplectic. In other words, It has a trivial canonical line bundle and has a natural compactification by a projective surface $Z_\uxi$. 
        \end{prop}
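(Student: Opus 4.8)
The plan is to produce an explicit nowhere-vanishing holomorphic two-form on $\suxi$; since on a surface every holomorphic two-form is automatically closed and a nonvanishing one is nondegenerate, such a form is at once a holomorphic symplectic form and a trivialization of the canonical bundle, while $Z_\uxi$ supplies the projective compactification by construction. The source of the form is the tautological (Liouville) one-form on the total space $M$ of the twisted canonical bundle $K_C(D)$: in a local trivialization near $p$, with coordinate $z$ on $C$ vanishing at $p$ and fiber coordinate $w$ (so that a point of the fiber is $w\,\tfrac{dz}{z^n}$), set $\lambda = w\,\tfrac{dz}{z^n}$ and $\omega = d\lambda = \tfrac{dw\wedge dz}{z^n}$. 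Away from $p$ this is the standard form $dw\wedge dz$ on $T^*C$, so $\omega$ is a meromorphic symplectic form on $M$ with $\mathrm{div}(\omega) = -n\,M_p$; equivalently $K_M \cong \O_M(-n\,M_p)$, with $\omega$ having no zeros on the affine part.

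First I would analyze the effect of a single blow-up in the tower on $\omega$. Working in the chart following the strict transform of $\delta_i$, the center $\f p_{1,i}$ is the reduced point $(z,w)=(0,\lambda_{i,n})$, and after the substitution $w-\lambda_{i,n}=zu$ one computes $\omega = \tfrac{du\wedge dz}{z^{\,n-1}}$, so the pole order along the newly created exceptional divisor drops by exactly one. The genericity hypothesis is what makes this local model recur: it keeps the centers $\f p_{1,i}$ distinct and reduced, and guarantees that at every stage the strict transform of $\delta_i$ again meets the latest exceptional divisor transversally in a single reduced point $\f p_{a,i}=(0,\lambda_{i,n-a+1})$, so the computation repeats verbatim with $n$ replaced by $n-a+1$. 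Iterating $n$ times along the $i$-th tower, the pullback $\rho^*\omega$ (with $\rho\colon T_\uxi\to M$ the composite blow-up) becomes $du\wedge dz$ near the final exceptional divisor $\Xi_{n,i}$: holomorphic and nowhere vanishing there.

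Next I would record, for each irreducible component of the exceptional locus and for the strict transform $f$ of the fiber $M_p$, the order of $\rho^*\omega$ along its generic point. The computation above yields coefficient $-(n-a)$ along $\Xi_{a,i}$ for $1\le a\le n-1$, coefficient $0$ along $\Xi_{n,i}$, and (from the complementary chart $z=s(w-\lambda_{i,n})$ of the first blow-up) coefficient $-n$ along $f$. Hence
\[
\mathrm{div}(\rho^*\omega) \;=\; -\,n\,f \;-\;\sum_{i=1}^{l}\sum_{a=1}^{n-1}(n-a)\,\Xi_{a,i},
\]
a divisor with no positive part, supported \emph{exactly} on $f+\sum_{i=1}^{l}\sum_{a=1}^{n-1}\Xi_{a,i}$, the locus removed in passing to $\suxi=T_\uxi\setminus\bigl(f+\sum_{i,a}\Xi_{a,i}\bigr)$.

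Restricting to $\suxi$ therefore removes all poles while introducing no zeros, so $\rho^*\omega|_{\suxi}$ is a nowhere-vanishing holomorphic two-form; this trivializes $K_{\suxi}$ and is the sought holomorphic symplectic form, with $\suxi\subset T_\uxi\subset Z_\uxi$ furnishing the natural projective compactification. I expect the main obstacle to be the bookkeeping of the second and third steps: verifying that the recursive local model holds at every level of each tower (which is precisely where the genericity of $\uxi$ enters, to keep the blow-up centers reduced and the intersections transverse), and confirming that the single divisor we do \emph{not} discard, $\Xi_{n,i}$, is exactly the component along which $\rho^*\omega$ has order $0$, so that the trivialization extends holomorphically across it.
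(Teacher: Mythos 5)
Your proposal is correct and takes essentially the same route as the paper: the paper's proof simply asserts the canonical divisor formula $K_{T_\uxi} = -nf - \sum_{i=1}^l\sum_{a=1}^{n}(n-a)\Xi_{a,i}$ and observes that its support is exactly the removed divisor $f+\sum_{i,a\leq n-1}\Xi_{a,i}$, with each $\Xi_{n,i}$ appearing with coefficient zero. Your explicit computation, pulling back the Liouville form $\omega = d\bigl(w\,\tfrac{dz}{z^n}\bigr)$ through the blow-up tower and tracking pole orders, is precisely a derivation of that asserted formula (and additionally exhibits the symplectic form concretely), so it fills in the detail the paper leaves implicit.
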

        \begin{proof}
            The canonical divisor of $T_\xi$ is given by 
            \begin{equation*}
                K_{T_\uxi} = -nf - \sum_{i=1}^l\sum_{a=1}^{n} (n-a)\Xi_{a,i}. 
            \end{equation*}
            Note that the divisors $\Xi_{n,i}$ appear with multiplicity 0 in the expression, so the canonical divisor of the complement $S_\uxi$ of $f+\sum_{i=1}^l \sum_{a=1}^{n-1} \Xi_{a,i}$ in $T_\uxi$ is trivial. 

        \end{proof}

         We will be interested in the moduli space of pure dimension one sheaves supported in $S_\uxi$ with some fixed topological invariants and an appropriate stability condition. To set this up, we first consider the topological invariants. Let $\Delta_i= \sum_{a=1}^n a\Xi_{a,i}\in \Pic(Z_\uxi)$. Let $\sm_0$ be the strict transform of the zero section $C_0$ in $M$. If the support of a pure dimension one sheaf on $Z_\uxi$ is entirely in $S_\uxi$, the class of its support $\ch_1(F)$ must satisfy the following conditions on the intersection numbers:
        \begin{equation*}
            \ch_1(F) \cdot f=0 , \quad \ch_1(F) \cdot \Xi_{a,i}=0 \textrm{  for  }a<n, 1\leq i\leq l
        \end{equation*}
        It is easy to check that such a class must be of the form $\Sigma_{\underline{m}}= r\sm_0 - \sum^l_{i=1} m_i\Delta_i$ where $r\geq 1$ and $\um= (m_1,...,m_l)\in \Z_{\geq 0}^{\times l}$ such that $\sum_{i=1}^l m_i=r$. Note that 
        \begin{equation*}
            \sm_{\underline{m}} \cdot \Xi_{n, i} = m_i\textrm{  for  } 1\leq i\leq l
        \end{equation*}
        
        The stability condition we need will be a slight modification of the usual Gieseker stability condition of coherent sheaves. We review the definition of $\beta$-twisted $A$-Gieseker semistablity condition used in \cite{Matsuki-Wentworth} and \cite[Section 5]{bayermacri}. Let $X$ be a smooth projective surface and $\beta, A\in NS(X)_\Q$ with $A$ ample.  We define the $\beta$-twisted Chern character of $F\in \tx{Coh}(X)$ to be $\ch^\beta(F):=\ch(F) \cup\exp(\beta)\in H^*(X,\Q)$.  Then the
        $\beta$-twisted Hilbert polynomial is defined to be
        \begin{equation*}
            P_\beta(F,t)= \int_X \ch^\beta(F(t))\Td_X.
        \end{equation*}
        Note that when $F$ is a torsion sheaf i.e. $\ch_0(F)=0$, we have 
        \begin{equation*}
            P_\beta(F, t) = \int_X \ch(F(t))\Td_X + \int_X \beta\cdot \ch_1(F(t)) =  P(F,t) + \int_X\beta\cdot \ch_1(F(t)) 
        \end{equation*}
        where $P(F,t)$ denotes the usual Hilbert polynomial. In particular, $P_\b(F, t) = P(F,t)$ when $\b=0$. Let $l(F)/d!$ be the leading coefficient of $P_\b(F,t)$. Define the reduced $\beta$-twisted Hilbert polynomial as $ p_\b(F,t)=P_\b(F,t)/l(F)$. Note that the leading coefficients of $P_\b(F, t)$ and $P(F, t)$ are the same.
        \begin{definition}
            A coherent sheaf $F$ on $X$ is said to be $\beta$-twisted $A$-Gieseker (semi)stable if it has support of pure dimension and for any proper subsheaf $F'\subset F$, one has 
            \begin{equation*}
                p_\b (F',t) < p_\b (F,t)\quad  \textrm{  for  }t\gg 0, \quad (\textrm{resp.  }\leq)
            \end{equation*}
        \end{definition}

        In our case, we will need to make a choice of $\beta$ and an ample line bundle $A$ on $Z_\uxi$. 
        \begin{itemize}
            \item (A choice of $\beta$) For a set of rational numbers $1>\b_1>...>\b_l\geq 0$, we choose $\b= \sum_{i=1}^l \b_i\Xi_{n,i}$ on $NS(Z_\uxi)_\Q$. Then note that $\b \cdot \sm_\um= b_i\Xi_{n,i}\cdot(-m_in\Xi_{n,i}-m_i(n-1)\Xi_{n-1,i} )= -\b_im_i (-n+(n-1))= \b_im_i$. 
            \item (A choice of $A$)  Let $\pi:Z_\uxi\to C$ be the composition of the blow-up morphism $Z_\uxi\to \overline{M}$ and the projection map $q:\overline{M}\to C$. Note that there exists an integer $k$ large enough such that $D_0:=kq^{-1}(p) + C_\infty$ is ample on $\overline{M}$ where $C_\infty$ is the infinity divisor. Then by the result of \cite{Kuchle}, we can always choose $k_1$ large enough such that the divisor $D_1:=k_1\pi_1^*D_0- \sum_{i=1}^l \Xi_{1,i}$ is ample, where $\pi_1:\overline{M_1}\to \overline{M}$ is the initial blow-up map. Proceed inductively, we see that there exists $k_n$ large enough such that $D_n:=k_n\pi_n^*D_{n-1}- \sum_{i=1}^l \Xi_{n,i}$ is ample. Hence, the restriction of the ample divisor $D_n$ to $S_\uxi$ can be written as  $A:=\kappa (\sum_{i=1}^l \Xi_{n,i}) = \pi^* (\kappa p)$ for some large enough $\kappa.$ 
        \end{itemize}
        
        Now we are ready to state the spectral correspondence due to Kontsevich-Soibelman \cite[Section 8.3]{Kontsevich-Soibelman} and Diaconescu-Donagi-Pantev \cite[Section 3.4]{Diaconescu_2018}:
        \begin{theorem}[Spectral correspondence] \label{spectral-corr}
        Fix the numerical data: $r\geq 1, d\in \Z$, $1>\a_1> ...> \a_l\geq 0$ (each $\a_i\in \Q)$, $m_1,...,m_l\in \Z\geq 0$ such that $\sum_{i=1}^l m_i=r$. For a generic choice of $\uxi$, there is an isomorphism between 
            \begin{itemize}
                \item the moduli stack $\frelativehxi$ of  semistable irregular $\uxi$-parabolic Higgs bundles on $C$ of rank $r$, degree $d$, parabolic weights $\underline{\a}=(\a_1,...,\a_l)$, flag type $\underline{m}= (m_1,..,m_l)$;
                \item the moduli stack $\fmsheaves$ of $\beta$-twisted $A$-Gieseker semistable compactly supported pure dimension one sheaves $F$ on $S_\uxi$ with $(\ch_0(F),\ch_1(F), \ch_2(F))= (0, \sm_\um, c)$ where $\sm_\um= r\sm_0 - \sum^l_{i=1}m_i\Delta_i$;
            \end{itemize}
            such that $\a_i=\b_i/n, d=c+r(g-1)$. 
        \end{theorem}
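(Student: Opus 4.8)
The plan is to adapt the Beauville--Narasimhan--Ramanan spectral construction to the irregular parabolic setting, realizing it in the precise geometry of $\suxi$ set up above and following the blueprint of \cite{Kontsevich-Soibelman} and \cite{Diaconescu_2018}. Write $M$ also for the total space of $K_C(D)$, with projection $q\colon M\to C$ and tautological section $\lambda\in H^0(M,q^*M)$. The starting point is that a Higgs field $\Phi\colon E\to E\otimes M$ is the same datum as an $\O_M$-module structure on $q^*E$ in which $\lambda$ acts as $\Phi$; pushing forward along $q$ recovers $(E,\Phi)$ as $(q_*\widetilde F,\, q_*(\lambda\cdot-))$, and the support of $\widetilde F$ is the spectral curve cut out by $\det(\lambda-\Phi)$. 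Thus each irregular $\uxi$-parabolic Higgs bundle determines a pure one-dimensional sheaf on $M$, and I would first make this assignment functorial in families over $\nld$.

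First I would analyze the sheaf along the fiber over $p$. Since $\Phi$ has a pole of order $n$ along $D=np$, the spectral curve escapes to the boundary of the projective completion $\overline M=\P(M\oplus\O)$; the genericity of $\uxi$ and the polar condition $\Phi_{D,i}=\Id\otimes\xi_i$ force its leading behaviour to be governed exactly by the divisors $\delta_i$. Concretely, the graded pieces $E^i_D/E^{i-1}_D$ being $\xi_i$-eigenspaces of multiplicity $m_i$ make the spectral curve pass through $\f p_{1,i}$ and, inductively, through each $\f p_{a,i}$ with multiplicity $m_i$. Performing the iterated blow-ups that produce $T_\uxi\subset Z_\uxi$ and taking strict transforms therefore resolves the spectral sheaf into a sheaf $F$ whose support is disjoint from $f$ and from the $\Xi_{a,i}$ with $a<n$, hence compactly supported in $\suxi$; this is precisely why those divisors are removed. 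Using $\Sigma_{\underline{m}}\cdot\Xi_{n,i}=m_i$ and $\Sigma_{\underline{m}}\cdot f=\Sigma_{\underline{m}}\cdot\Xi_{a,i}=0$ for $a<n$, a Chern-character computation identifies $\ch_1(F)=\Sigma_{\underline{m}}$. Since $\suxi$ is holomorphic symplectic and $F$ is supported there, the Todd correction drops out, so for a pure one-dimensional sheaf $\chi(F)=\ch_2(F)=c$; combined with $\chi(F)=\chi(\pi_*F)=\chi(E)=d-r(g-1)$ this gives the stated relation $d=c+r(g-1)$.

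Next I would recover the parabolic data and construct the inverse. The flag $E^\bullet_p$ and the weights are read off from the local structure of $F$ along the exceptional divisors $\Xi_{n,i}$: the intersection number $\Sigma_{\underline{m}}\cdot\Xi_{n,i}=m_i$ records the jump $\dim(E^i_p/E^{i-1}_p)=m_i$, and the weight $\alpha_i$ is attached to the component of the support meeting $\Xi_{n,i}$. Conversely, given a $\b$-twisted $A$-Gieseker semistable sheaf $F$ on $\suxi$, I would set $E=\pi_*F$, recover $\Phi$ from multiplication by $\lambda$ (which lifts to $Z_\uxi$), and read off the flag and the polar part $\uxi$ from the restriction of $F$ to the $\Xi_{n,i}$. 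The freeness of the successive quotients, guaranteed in the definition of the functor, is what makes $\pi_*F$ a genuine $\uxi$-parabolic Higgs bundle. I would then check that the two assignments are mutually inverse and compatible with base change over $\nld$, yielding an isomorphism of moduli stacks.

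The main obstacle is the exact matching of the two stability conditions, together with the scaling $\alpha_i=\beta_i/n$. Here I would compare, for every $\Phi$-invariant parabolic subbundle $F\subset E$ and, dually, every saturated subsheaf, the reduced parabolic Hilbert polynomial $\para P_{F_*}(t)$ with the reduced $\b$-twisted Hilbert polynomial $p_\b(\,\cdot\,,t)$. Because $A=\pi^*(\kappa p)$, the leading coefficient $\Sigma_{\underline{m}}\cdot A=\kappa r$ is proportional to the rank, matching the normalization by $\rk(E)$ in $\para P$; the twist contributes the constant $\b\cdot\ch_1(F)=\sum_i\beta_i\,m_i(F)$, which I would match against the parabolic correction $\sum_i\alpha_i\,\chi(E^i_D/E^{i-1}_D)=n\sum_i\alpha_i\,m_i(F)$, forcing $\alpha_i=\beta_i/n$. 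The delicate technical point is to prove that saturated subsheaves of $F$ correspond bijectively to $\Phi$-invariant parabolic subbundles carrying the induced flag, so that $\para P_{F_*}<\para P_{E_*}$ holds for all destabilizing candidates if and only if $p_\b(F',t)<p_\b(F,t)$ does; once this equivalence is established, semistability is preserved in both directions and the isomorphism follows.
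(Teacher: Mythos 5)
Your proposal follows essentially the same route as the paper's proof: both implement the Kontsevich--Soibelman / Diaconescu--Donagi--Pantev spectral correspondence on the blown-up surface $Z_\uxi$, and your stability matching --- leading coefficient $\kappa r$ from $A=\pi^*(\kappa p)$, twist term $\beta\cdot\Sigma_{\underline{m}}=\sum_i\beta_i m_i$ matched against $\sum_i\alpha_i\chi(E^i_D/E^{i-1}_D)=n\sum_i\alpha_i m_i$, forcing $\alpha_i=\beta_i/n$, plus the subobject correspondence --- is exactly the computation in the paper's appendix. The only real difference is one of emphasis: the paper writes out the direction sheaf $\mapsto$ Higgs bundle in detail (the filtration $E^\bullet_D$ built from kernels of $E_D\twoheadrightarrow\pi_*(F\otimes\O_{n\Xi_{n,i}})$ and a local blow-up-coordinate verification of the polar condition $\Phi_{D,i}=\Id\otimes\xi_i$), citing Diaconescu--Donagi--Pantev for the converse, whereas you sketch mainly the converse (BNR plus strict transforms) and leave that pushforward analysis implicit.
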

        \begin{proof}
            See Appendix \ref{proof of spectral}.
        \end{proof}

        \begin{rem}
            By the spectral correspondence, for generic $\uxi$, one can also construct the coarse moduli space of $\frelativehxi \cong \fmsheaves$ as an open subset of the coarse moduli space $\muxi$ of the moduli stack of $\b$-twisted $A$-Gieseker semistable sheaves on the surface $Z_\uxi$. With the same numerical data as in Theorem \ref{spectral-corr}, it follows from Theorem \ref{spectral-corr} that the coarse moduli space of stable irregular $\uxi$-parabolic Higgs bundles $\huxi:= P^{-1}(\uxi)$ is isomorphic to an open subset of $\muxi.$
        \end{rem}

\section{Cohomology}
\subsection{Cohomology of the holomorphic symplectic surface}
As a preparation for the next section, we compute (the mixed Hodge structure of) the cohomology of the surface $S_\uxi$. Recall that $\overline{M}=\mathbb{P}(M\oplus \O)$ and $\rho:Z_\uxi \to \overline{M}$ is the iterated blow-up introduced in Section \ref{sec:spectral correspondence}. The surface $S_\uxi$ is defined by the complement of the divisor $Y_\uxi:=C_{\infty}+f+ \sum^l_{i=1} \sum ^{n-1}_{a=1} \Xi_{a,i}$, so $Y_\uxi= Z_\uxi\setminus S_{\uxi}.$ Denote $\iota:S_\uxi \to Z_\uxi$ the canonical inclusion and $q:S_\uxi \to C$ be the projection. By the blow-up formula, the cohomology of $Z_\uxi$ is given by
    \begin{equation*}
       H^k(Z_\uxi,\Q)= \begin{cases}
                    H^2(\overline{M}) \oplus \left(\bigoplus_{1\leq a\leq  m_i,1\leq i\leq l}\Q[\Xi_{a,i}]\right), \quad &k=2 \\
                    H^k(\overline{M},\Q)   = (H^*(C,\Q)\otimes H^*(\P^1,\Q))^k,\quad &k\neq 2
                    \end{cases}
    \end{equation*}

    \begin{prop}\label{prop:coh of surface}
        The cohomology of the surface $S_\uxi$ is given by 
        \begin{equation*}
        H^*(S_\uxi, \Q)\cong H^*(C, \Q) \oplus \left( \bigoplus_{i=1}^l \Q\left[\Xi_{n,i}\right]\right)
        \end{equation*}
        where we write $\left[\Xi_{n,i}\right]:=\iota^*\left[\Xi_{n,i}\right]$ by abusing notation.
              
        \end{prop}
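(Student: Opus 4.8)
The plan is to read off $H^*(\suxi)$ from the cohomology of the ambient projective surface $\zuxi$ (known by the blow-up formula quoted just above) together with the cohomology of the complementary divisor $Y_{\uxi}=\zuxi\setminus\suxi$, via the long exact sequence of the pair. Writing $\iota\colon\suxi\hookrightarrow\zuxi$ for the open inclusion and using that $\zuxi$ and $Y_{\uxi}$ are projective, I would use the complement sequence
\begin{equation*}
\cdots \to H^k_c(\suxi)\to H^k(\zuxi)\xrightarrow{\ \mathrm{res}\ } H^k(Y_{\uxi})\to H^{k+1}_c(\suxi)\to\cdots
\end{equation*}
together with Poincar\'e--Lefschetz duality $H^k_c(\suxi)\cong H^{4-k}(\suxi)^\vee$ for the smooth surface $\suxi$. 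To obtain the mixed Hodge structure (and not just Betti numbers), the same bookkeeping is carried by Deligne's weight spectral sequence for the normal crossing compactification $(\zuxi,Y_{\uxi})$, whose $E_1$-page is built from $H^*(\zuxi)$ and from the cohomology of the components and the double points of $Y_{\uxi}$ with Tate twists; for a surface complement it degenerates at $E_2$, and the twists record the weight-$2$ type of the algebraic classes $[\Xi_{n,i}]$.

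First I would compute $H^*(Y_{\uxi})$. The divisor $Y_{\uxi}=C_\infty+f+\sum_{i}\sum_{a\le n-1}\Xi_{a,i}$ is a reduced normal crossing curve whose dual graph is a tree: $C_\infty$ meets $f$ in one point, $f$ meets each $\Xi_{1,i}$, and each chain $\Xi_{1,i}-\cdots-\Xi_{n-1,i}$ hangs off $f$, with no triple points and no further incidences. All components are rational except $C_\infty\cong C$. The Mayer--Vietoris (\v{C}ech) spectral sequence for this configuration then gives $H^0(Y_{\uxi})=\Q$, $H^1(Y_{\uxi})=H^1(C_\infty)=\Q^{2g}$ (a tree contributes nothing to $H^1$), and $H^2(Y_{\uxi})=\bigoplus_{c}\Q\,[Y_c]$, one generator per irreducible component.

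Next I would analyze $\mathrm{res}\colon H^k(\zuxi)\to H^k(Y_{\uxi})$ degree by degree. In degree $0$ it is an isomorphism, and in degree $1$ it is an isomorphism since $H^1(\zuxi)=H^1(\overline{M})=H^1(C)$ restricts isomorphically to $H^1(C_\infty)$ via $C_\infty\cong C$. The essential point is degree $2$, where $\phi\colon H^2(\zuxi)\to H^2(Y_{\uxi})$ is the intersection pairing $\gamma\mapsto(\gamma\cdot[Y_c])_c$. I would prove $\phi$ is surjective by showing the classes $[C_\infty],[f],[\Xi_{a,i}]_{a\le n-1}$ are linearly independent in $H^2(\zuxi)$ and invoking nondegeneracy of the intersection form on the smooth projective surface $\zuxi$. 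Independence is read off the blow-up basis $\{\zeta,F,[\Xi_{a,i}]_{1\le a\le n}\}$, with $\zeta$ a section class and $F=q^*[\mathrm{pt}]$ the fibre class: $[C_\infty]$ is the only one with nonzero $\zeta$-component, the total-transform relation $[f]=F-\sum_{i}\sum_{a}[\Xi_{a,i}]$ makes $[f]$ the only remaining one with nonzero $F$-component, and the $[\Xi_{a,i}]_{a\le n-1}$ are basis vectors. Surjectivity yields $\operatorname{coker}\phi=0$ and $\dim\ker\phi=\dim H^2(\zuxi)-\dim H^2(Y_{\uxi})=nl-(n-1)l=l$.

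Feeding this back through the sequence and dualizing gives $H^0(\suxi)=\Q$, $H^1(\suxi)=\Q^{2g}$, $H^2(\suxi)\cong(\ker\phi)^\vee$ of dimension $l$, and $H^3=H^4=0$. It then remains to identify the surviving classes with those named in the statement. Degrees $0,1$ are pulled back from $C$: the strict transform $\Sigma_0$ of the zero section is a section of $q\colon\suxi\to C$, so $q^*\colon H^{\le 1}(C)\hookrightarrow H^*(\suxi)$. In degree $2$ the restriction $\iota^*\colon H^2(\zuxi)\to H^2(\suxi)$ is surjective (dual to $\ker\phi\hookrightarrow H^2(\zuxi)$), it kills the removed divisor classes, $\iota^*[f]=\iota^*[\Xi_{a,i}]=0$ for $a\le n-1$, and restricting the total-transform relation gives $q^*[\mathrm{pt}]=\sum_{i}\iota^*[\Xi_{n,i}]$; hence $H^2(\suxi)$ is spanned by the $l$ classes $[\Xi_{n,i}]:=\iota^*[\Xi_{n,i}]$, which is the asserted description (with $q^*H^2(C)$ sitting inside as $\sum_i[\Xi_{n,i}]$). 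The main obstacle is precisely this degree-$2$ step: one must control the intersection matrix of the iterated-blow-up configuration (the self-intersections $\Xi_{a,i}^2$, the chain adjacencies, and the section and fibre classes) well enough both to prove $\phi$ surjective and to pin down that the surviving quotient is spanned exactly by the last exceptional classes $[\Xi_{n,i}]$. Everything else --- the cohomology of the tree $Y_{\uxi}$ and the low-degree restriction maps --- is routine once that intersection data is in hand.
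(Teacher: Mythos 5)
Your proposal is correct and takes essentially the same route as the paper: the compactly supported long exact sequence for the pair $(Z_{\uxi}, Y_{\uxi})$, a Mayer--Vietoris computation of $H^*(Y_{\uxi})$ using the tree-shaped dual graph, surjectivity of the restriction $H^2(Z_{\uxi})\to H^2(Y_{\uxi})$, and Poincar\'e duality. Your intersection-form argument for the degree-$2$ surjectivity and your explicit identification of the generators (including the relation $q^*[\mathrm{pt}]=\sum_{i}\iota^*[\Xi_{n,i}]$) simply flesh out steps the paper states tersely.
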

        \begin{proof}
        We denote the complement $Z_\uxi \setminus S_\uxi$ by $Y_\uxi$. By the Mayer-Vietoris sequence, the cohomology of $Y_\uxi$ is given by 
        \begin{equation*}
                H^k(Y_\uxi,\Q) = 
                \begin{cases}
                    \Q, \quad &k=0 \\
                    \Q^{2g},\quad &k=1\\
                    \Q^{l(n-1)+2}, \quad &k=2
                    \end{cases}
            \end{equation*}
            
        Then we have the long exact sequence \begin{center}
        $\begin{array}{c|ccc}
             &H^k_c(S_{\uxi},\Q) &H^k(Z_\uxi,\Q) &H^k(Y_\uxi,\Q)   \\ \hline
            k=0 & 0 &\Q &\Q \\
            k=1 & 0 &\Q^{2g} &\Q^{2g} \\
            k=2 & ? &\Q^{nl+2} &\Q ^{l(n-1)+2}\\
            k=3 & ? &\Q^{2g} &0 \\
            k=4 & \Q &\Q &0 
        \end{array} $
    \end{center}
       By iteratively applying Lefschetz hyperplane theorem, one can see that the morphism $H^k(Z_\uxi,\Q) \to H^k(Y_\uxi,\Q)$ is surjective for $k<2$, hence $\Q\to \Q$ and $\Q^{2g}\to \Q^{2g}$ in the first and second rows are isomorphisms.  Also, clearly $H^4_c(S_\uxi,\Q)=\Q$. To determine $H^k_c(S_\uxi,\Q)$ for $k=2,3$, it suffices to show that the pullback map $H^2(Z_\uxi,\Q) \to H^2(Y_\uxi,\Q)$ is surjective. Note that $H^2(Y_\uxi,\Q)$ can be written as a direct sum of the second cohomology group of each component, which belongs to the image of the pullback map. Therefore we have $H^2_c(S_\uxi, \Q)=\Q^l$ and $H^3_c(S_\uxi,\Q)=\Q^{2g}$. Now the proposition follows from Poincaré duality.

        \end{proof}

Recall that the pure part of the cohomology of a variety $X$ is defined to be $H^k_{\pure}(X, \Q):= \textrm{Gr}^W_kH^k(X,\Q)$ with respect to the mixed Hodge structure on $H^k(X,\Q). $ In particular, when $X$ is smooth, the direct sum of the pure part $H^*(X, \Q)$ is a subalgebra of $H^*(X,\Q)$ since $\textrm{Gr}^W_kH^k(X, \Q)= W_kH^k(X).$ Alternatively, if $i:X\hookrightarrow \overline{X}$ is any smooth compactification, then $H^*_{\pure}(X,\Q)$ can also be defined as the image of the homomorphism $i^*:H^*(\overline{X})\to H^*(X).$ 

Coming back to the previous case, it is easy to see that $H^2(Z_\uxi,\Q)=H^2_{\pure}(Z_\uxi,\Q)$. Moreover, in the proof of Proposition \ref{prop:coh of surface}, the long exact sequence of cohomology groups of the pair $(Z_\uxi, Y_\uxi)$ is compatible with mixed Hodge structures and we showed that the restriction morphism $H^k(Z_\uxi) \to H^k(Y_\uxi)$ is surjective for all $k \geq 0$. This implies that $\textrm{Gr}^W_qH^k_c(S_\uxi,\Q)=0$ except for $q=k$, hence we have $H^*_{c,\pure}(S_\uxi)=H_c^*(S_\uxi)$. By Poincaré duality, we have the following proposition.

\begin{prop}\label{prop: purity of suxi}
    The cohomology of the surface $S_\uxi$ is of pure type. In other words, $H^*_\pure(S_\uxi,\Q)=H^*(S_\uxi,\Q)$.
\end{prop}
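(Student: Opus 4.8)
The plan is to deduce purity of the ordinary cohomology $H^*(S_\uxi,\Q)$ from purity of the compactly supported cohomology $H^*_c(S_\uxi,\Q)$, the latter being essentially what the discussion preceding the statement already provides. First I would record the key input carefully. Because the restriction maps $H^k(Z_\uxi,\Q)\to H^k(Y_\uxi,\Q)$ are surjective for every $k\geq 0$ (established in the proof of Proposition \ref{prop:coh of surface} via the iterated Lefschetz hyperplane theorem for $k<2$ and a direct check at $k=2$), the connecting homomorphisms $H^{k-1}(Y_\uxi,\Q)\to H^k_c(S_\uxi,\Q)$ in the long exact sequence of the pair all vanish. Since that long exact sequence is one of mixed Hodge structures, it breaks into short exact sequences
\begin{equation*}
0\to H^k_c(S_\uxi,\Q)\to H^k(Z_\uxi,\Q)\to H^k(Y_\uxi,\Q)\to 0
\end{equation*}
of mixed Hodge structures. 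As $Z_\uxi$ is smooth and projective, $H^k(Z_\uxi,\Q)$ is pure of weight $k$, so its sub-Hodge-structure $H^k_c(S_\uxi,\Q)$ is pure of weight $k$ as well; equivalently $H^*_{c,\pure}(S_\uxi)=H^*_c(S_\uxi)$.

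Next I would transfer this purity to ordinary cohomology by Poincaré duality. The surface $S_\uxi$ is smooth of complex dimension $2$, so Poincaré duality furnishes an isomorphism of mixed Hodge structures
\begin{equation*}
H^k(S_\uxi,\Q)\cong H^{4-k}_c(S_\uxi,\Q)^\vee(-2).
\end{equation*}
Tracking weights: $H^{4-k}_c(S_\uxi,\Q)$ is pure of weight $4-k$ by the previous step, its dual is pure of weight $-(4-k)$, and the Tate twist $(-2)$ raises the weight by $4$, leaving a pure Hodge structure of weight $k$. Hence $H^k(S_\uxi,\Q)$ is pure of weight $k$ for every $k$, which is exactly the assertion $H^*_\pure(S_\uxi,\Q)=H^*(S_\uxi,\Q)$.

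The only genuinely delicate point is the weight bookkeeping in the duality step, so I would take care to invoke Poincaré duality as an isomorphism of mixed Hodge structures with the correct Tate twist $(-\dim_\C S_\uxi)=(-2)$, rather than as a bare vector-space isomorphism; everything else is a formal consequence of the two facts that $Z_\uxi$ is smooth projective and that the restriction maps to $Y_\uxi$ are surjective. As an alternative route that sidesteps explicit duality, one could instead feed the groups $H^*(S_\uxi,\Q)$ from Proposition \ref{prop:coh of surface} into the Gysin sequence relating $H^*(S_\uxi)$, $H^*(Z_\uxi)$ and the cohomology of the boundary $Y_\uxi$, and verify directly that no weight other than $k$ occurs in degree $k$; but the duality argument is cleaner and simply reuses the purity of $H^*_c$ already in hand.
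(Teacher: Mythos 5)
Your proof is correct and follows essentially the same route as the paper: purity of $H^*_c(S_\uxi,\Q)$ deduced from the surjectivity of the restriction maps $H^k(Z_\uxi,\Q)\to H^k(Y_\uxi,\Q)$ in the mixed-Hodge-structure long exact sequence of the pair (so $H^k_c(S_\uxi,\Q)$ is a sub-Hodge-structure of the pure $H^k(Z_\uxi,\Q)$), followed by Poincar\'e duality. Your explicit weight bookkeeping with the Tate twist $(-2)$ just spells out the duality step the paper invokes implicitly.
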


\subsection{Generators}
In this section, we study the cohomology of the moduli space $\huxi:= P^{-1}(\uxi)$ for a generic $\uxi.$ Assume that $r, d$ are coprime such that there exist a universal  bundles $\euxi$ on $\huxi\times C$  and a universal flag of subbundles $\euxiflagd$ on $\huxi\times D$.  Let $\euxiflagp$ be the restriction of $\m E^\bullet_{\huxi\times D}$ to $\huxi\times \{p\}= \huxi$, then we define the vector bundle $\quxi^i = \m E^i_{\huxi\times p}/\m E^{i-1}_{\huxi\times p}$ on $\huxi$.

Let $\m M_\uxi$ to be the moduli space of $\b$-twisted $A$-Gieseker stable sheaves on $Z_\uxi$ with fixed Chern characters $(\ch_0(F),\ch_1(F), \ch_2(F))= (0, \sm_\um, c)$ where $\sm_\um= r\sm_0 - \sum^l_{i=1}m_i\Delta_i$. Assume that $\b$ and $A$ are chosen generic enough such that every $\b$-twisted $A$-Gieseker semistable sheaf is $\b$-twisted $A$-Gieseker stable. In particular, $\m M_\uxi$ is projective. Moreover, the moduli space $\m H_\uxi$ is an open subset in $\m M_{\uxi}$ via the spectral correspondence. Since the ample divisor $A$ is chosen such that $A$ restricted to $S_\uxi$ is $\pi^*(\kappa p)$ for some sufficiently large $\kappa$, we have $P(F, t) = P(\pi_*F, \kappa t)= r\kappa t + d+r(1-g)$ for each $F\in \muxi$. By applying \cite[Corollary 4.6.6]{huybrechts-lehn}, the assumption of $r$ and $d$ being coprime implies that there exists a universal sheaf $\m F_{\m M_{\uxi}}$ on $\m M_{\uxi}\times Z_\uxi$. We will denote by $\m F$ its restriction to $\m H_\uxi\times Z_\uxi$.

\begin{prop}\label{prop:pure generator}
    The K\"{u}nneth components of the Chern classes of $\m F$ generate $H_{\pure}^*(\m H_\uxi,\Q).$
\end{prop}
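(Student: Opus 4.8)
The plan is to derive the statement from a stronger generation statement on the ambient smooth projective moduli space $\muxi$, together with the description of the pure part of the cohomology of a smooth variety as the image of restriction from a smooth compactification, recalled before Proposition~\ref{prop: purity of suxi}. The first point to establish is that $\muxi$ is \emph{smooth}, so that it serves as a smooth projective compactification of the open subset $\huxi$ (which is itself smooth by Corollary~\ref{smoothness-of-P}). For $[F]\in\muxi$ the obstruction to smoothness lies in the trace-free part $\Ext^2(F,F)_0$, and Serre duality on $\zuxi$ identifies this with the dual of $\Hom(F,F\otimes\omega_{\zuxi})_0$. Every such $F$ is supported inside $S_\uxi$, on which $\omega_{\zuxi}$ restricts to the trivial bundle because $S_\uxi$ is holomorphic symplectic; hence $F\otimes\omega_{\zuxi}\cong F$ and $\Hom(F,F\otimes\omega_{\zuxi})_0\cong\Hom(F,F)_0=0$ by stability. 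Thus $\muxi$ is smooth, and $H^*_\pure(\huxi,\Q)$ equals the image of the restriction $\rho\colon H^*(\muxi,\Q)\to H^*(\huxi,\Q)$. Since $\m F=\m F_{\m M_\uxi}|_{\huxi\times\zuxi}$ and restriction commutes with the Künneth decomposition, the Künneth components of $\ch(\m F)$ are precisely the $\rho$-images of those of $\ch(\m F_{\m M_\uxi})$; as Chern classes and the Chern character generate the same $\Q$-subalgebra, it suffices to prove that the Künneth components of $\ch(\m F_{\m M_\uxi})$ generate all of $H^*(\muxi,\Q)$.

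For this I would run the Künneth decomposition of the diagonal argument. Let $p_1,p_2\colon\muxi\times\muxi\to\muxi$ be the two projections, and let $p_{12}\colon\muxi\times\muxi\times\zuxi\to\muxi\times\muxi$ and $p_3\colon\muxi\times\muxi\times\zuxi\to\zuxi$ be the remaining projections. Writing $\m F_1,\m F_2$ for the two pullbacks of $\m F_{\m M_\uxi}$ to $\muxi\times\muxi\times\zuxi$, set $\mathcal{E}xt^\bullet:=Rp_{12*}R\mathcal{H}om(\m F_1,\m F_2)$, a perfect complex on $\muxi\times\muxi$ whose fibre over $(F_1,F_2)$ computes $\Ext^\bullet(F_1,F_2)$. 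Grothendieck--Riemann--Roch gives
\[
\ch(\mathcal{E}xt^\bullet)\;=\;p_{12*}\!\left(\ch(\m F_1)^\vee\cdot\ch(\m F_2)\cdot p_3^*\Td(\zuxi)\right),
\]
and evaluating the right-hand side by integrating products of the Künneth components of $\ch(\m F_i)$ against $\Td(\zuxi)$ over the fibre $\zuxi$ shows that every component of $\ch(\mathcal{E}xt^\bullet)$ lies in the subalgebra of $H^*(\muxi\times\muxi,\Q)$ generated by $p_1^*R$ and $p_2^*R$, where $R\subseteq H^*(\muxi,\Q)$ is generated by the Künneth components of $\ch(\m F_{\m M_\uxi})$. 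The geometric input needed to pass from $\mathcal{E}xt^\bullet$ to the diagonal is that for $F_1\not\cong F_2$ one has $\Hom(F_1,F_2)=\Ext^2(F_1,F_2)=0$, the vanishing of $\Ext^2$ again using that the sheaves are supported in the symplectic locus $S_\uxi$; this forces $\mathcal{E}xt^\bullet$ to be concentrated in a single degree away from the diagonal and allows one to recover $[\Delta_{\muxi}]$ from the Chern classes of $\mathcal{E}xt^\bullet$, whence $[\Delta_{\muxi}]$ lies in the same subalgebra. Writing $[\Delta_{\muxi}]=\sum_i a_i\boxtimes b_i$ with $a_i,b_i\in R$ and using the reproducing identity $\gamma=p_{2*}\bigl(p_1^*\gamma\cdot[\Delta_{\muxi}]\bigr)=\sum_i\bigl(\int_{\muxi}\gamma\, a_i\bigr)b_i$ for every $\gamma\in H^*(\muxi,\Q)$ then yields $H^*(\muxi,\Q)=R$, which completes the argument.

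The step I expect to be the main obstacle is precisely the passage from the Grothendieck--Riemann--Roch expression for $\ch(\mathcal{E}xt^\bullet)$ to the class $[\Delta_{\muxi}]$ itself: the Chern character of the virtual complex is not literally the diagonal class, and the contribution of the genuinely nonzero sheaf $\mathcal{E}xt^1$ (the relative $\Ext^1$, which persists off the diagonal) must be disentangled from the diagonal-supported $\mathcal{E}xt^0$ and $\mathcal{E}xt^2$. This is exactly the analysis carried out by Markman for moduli of sheaves on symplectic surfaces, and the substance of the proof will be to import that analysis into the present setting; the point that makes this legitimate despite $\zuxi$ not being symplectic is that every sheaf parametrised by $\muxi$ is supported on $S_\uxi$, where the canonical bundle is trivial, which is exactly what supplies the off-diagonal vanishing of $\Ext^0$ and $\Ext^2$ used above.
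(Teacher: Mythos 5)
Your reduction hinges on the claim that every sheaf parametrized by $\muxi$ is supported inside $S_\uxi$, and hence that $\muxi$ is smooth and that $\Ext^2$ vanishes off the diagonal of $\muxi\times\muxi$. That claim is unjustified and in general false: via the spectral correspondence, $\huxi$ is only an \emph{open} subset of $\muxi$, and the boundary $\muxi\setminus\huxi$ consists precisely of stable sheaves of class $(0,\Sigma_{\um},c)$ whose support meets $Y_\uxi=Z_\uxi\setminus S_\uxi$ --- the numerical conditions $\Sigma_{\um}\cdot f=\Sigma_{\um}\cdot\Xi_{a,i}=0$ for $a<n$ do not prevent the support from meeting, or containing components of, $Y_\uxi$. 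For such a boundary sheaf $F$ the restriction of $K_{Z_\uxi}$ to the support of $F$ need no longer be trivial, so $F\otimes K_{Z_\uxi}\not\cong F$ in general; consequently neither your Serre-duality vanishing of $\Ext^2(F,F)_0$ (your smoothness claim for $\muxi$) nor your off-diagonal vanishing of $\Ext^2(F_1,F_2)$ on $\muxi\times\muxi$ goes through. Your final reproducing identity $\gamma=p_{2*}\bigl(p_1^*\gamma\cdot[\Delta_{\muxi}]\bigr)$ also needs Poincar\'e duality on a smooth $\muxi$, so the whole chain breaks at this one point, and with it the claimed generation of $H^*(\muxi,\Q)$, which is in any case stronger than what is needed.

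The paper's proof is built exactly to avoid this. First, it runs Markman's argument on the \emph{asymmetric} product $\muxi\times\huxi$, for the graph $\Delta$ of the inclusion $\huxi\hookrightarrow\muxi$: in the identities \eqref{ext2} and \eqref{ext22} one of the two sheaves, $F$, always lies in $\huxi$ and is therefore supported in the symplectic locus, which suffices to give $\Ext^2_{Z_\uxi}(F,G)\cong\Hom(G,F)^\vee$ and $\Ext^2_{Z_\uxi}(G,F)\cong\Hom(F,G)^\vee$ for an \emph{arbitrary} $G\in\muxi$; no condition on boundary sheaves is ever used. Second, since Markman's formula $[\Delta]=c_m[-\m Ext^!_{\pi_{13}}(\pi^*_{12}\m F_{\muxi},\pi^*_{23}\m F)]$ does not require smoothness of $\muxi$, the paper passes to a resolution $f:\td{\muxi}\to\muxi$, which is then a genuine smooth compactification of $\huxi$, pulls the diagonal class back to $\td{\muxi}\times\huxi$, and applies the standard argument of McGerty--Nevins to get $H^*_{\pure}(\huxi,\Q)\subset H^*_{\td{\Delta}}(\huxi,\Q)\subset H^*_{\ch(\m F)}(\huxi,\Q)$ directly. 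To salvage your outline you would have to prove both the support claim and smoothness for $\muxi$ (not available here), or else replace your symmetric diagonal argument by the graph of $\huxi\hookrightarrow\muxi$ together with a resolution, which is exactly the paper's route.
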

\begin{proof}
    Let $H^*_{\ch(\m F)}(\huxi, \Q)\subset H^*(\huxi, \Q)$ be the subring generated by the K\"{u}nneth component of the Chern classes of $\m F$. Then it is clear that $H^*_{\ch(\m F)}(\huxi, \Q)\subset H^*_{\pure}(\huxi, \Q)$. Recall that each $F\in \m H_{\uxi}$ viewed as a $\b$-twisted $A$-Gieseker stable sheaf via the spectral correspondence is supported on the open surface $S_{\uxi}\subset Z_{\uxi}$ and $S_{\uxi}$ is holomorphic symplectic. In particular, we have $F\otimes K_{Z_{\uxi}} \cong F\otimes K_{S_{\uxi}} \cong F.$ Then this is the setup where we can apply the argument and result of Markman \cite[Section 4]{markman2002symplectic}. 
    
    If $G$ is another $\b$-twisted $A$-Gieseker stable sheaf in $\m M_{\uxi}$, we have 
    \begin{align}\label{ext2}
        \Ext^2_{Z_{\uxi}}(F, G)&\cong \Hom (G, F\otimes K_{Z_{\uxi}})^\vee\cong  \Hom(G, F)^\vee ; \\ \label{ext22}
        \Ext ^2_{Z_{\uxi}}(G, F) &\cong \Hom (F, G\otimes K_{Z_{\uxi}} )^\vee\cong  \Hom (F\otimes K_{Z_{\uxi}}^{-1}, G)^\vee\cong \Hom (F, G)^\vee.
    \end{align}
    Let us denote by $\pi_{ij}$ the projection from $\m M_{\uxi}\times Z_{\uxi}\times \m H_{\uxi}$ to the product of the $i$-th and $j$-th factors. For any flat projective morphism $f:X\to Y$ and coherent sheaves $F_1,F_2$ on $X$, we denote by $\m Ext^i_f(F_1, F_2) := R^if_*\m Hom(F_1, F_2)$ the $i$-th relative extension sheaf on $Y$ and $\m Ext^!_f(F_1, F_2):= \sum (-1)^i\m Ext^i_f(F_1, F_2)$ the corresponding class in the Grothendieck group of $Y$. Then the identities \eqref{ext2}  and \eqref{ext22} above imply that the following relative extension sheaves 
    \begin{align*}
        \m Ext^2_{\pi_{13}}(\pi_{12}^* \m F_{\m M_{\uxi}} , \pi_{23}^*\m F) , \quad \m Ext^2_{\pi_{13}}(\pi_{23}^* \m F , \pi_{12}^*\m F_{\m M_{\uxi}}) 
    \end{align*}
    are supported as line bundles on the graph $\Delta \subset \m M_{\uxi}\times \m H_{\uxi}$ of $\huxi\hookrightarrow \muxi$.  Then we can apply the proof of \cite[Theorem 1]{markman2002symplectic} verbatim to $\Delta\subset \muxi\times \huxi$ to show that the class of $\Delta$ in the Borel-Moore homology is given by the image of the Poincar\'{e}-duality map of
    \begin{equation}\label{markman's equality}
         c_m [-\m Ext^!_{\pi_{13}}(\pi^*_{12}\m F_{\muxi}, \pi^*_{23}\m F)].
    \end{equation}
    As remarked in \cite[Section 4]{markman2002symplectic}, the argument for the equality \eqref{markman's equality} does not require the smoothness of $\muxi$. 

    Let $f: \td{\m M_{\uxi}}\to \muxi$ be a resolution of $\muxi$ which is a then a smooth compactification of $\huxi$. Denote the inclusion by $i:\huxi\hookrightarrow \td{\muxi}$ and its graph by $\td{\Delta} \subset \td{\muxi}\times \huxi$. Let $H^*_{\td{\Delta}}(\huxi, \Q)\subset H^*(\huxi,\Q)$ be the linear subspace spanned by the right hand K\"{u}nneth components of the class of $\td{\Delta}$. Since $H^*_{\pure}(\huxi, \Q)= Im(H^*(\td{\muxi},\Q)\to H^*(\huxi,\Q)) 
    $, a standard argument \cite[Proposition 2.1]{mcgerty-nevins} shows that $H_{\pure}^* (\huxi, \Q)\subset  H_{\td{\Delta}}^*(\huxi, \Q)$. Finally, since $\Delta$ is pulled back to $\td{\Delta}$ under the natural map $f\times \Id:\td{\muxi}\times \huxi \to \muxi\times \huxi$, the class of $\td{\Delta}$ is given by $c_m [-\m Ext^!_{\pi_{13}}((f\times \Id\times \Id)^*\pi^*_{12}\m F_{\muxi}, \pi^*_{23}\m F)].$ It follows that $H^*_{\td{\Delta}}(\huxi, \Q) \subset H^*_{\ch(\m F)}(\huxi, \Q)$ (see \cite[Corollary 2]{markman2002symplectic}). Combining the inclusions of subrings, we conclude that $H^*_{\ch(\m F)}(\huxi,\Q)= H^*_{\pure}(\huxi,\Q)$. 
\end{proof}

    Define the linear map 
    \begin{equation*}
        \ch_{\m F}(-) : H^*(\zuxi, \Q)\to H^*(\huxi, \Q), \quad \a \mapsto  q_{\huxi, *} \{\ch(\m F)\cdot q^*_{\zuxi} (\a\cdot \Td_{\zuxi}) \}.
    \end{equation*}
    Note that the K\"{u}nneth components of the Chern classes of $\m F$ in $H^k(\huxi,\Q)$ can always be expressed as the projection of the image of $\ch_{\m F}(-)$  from $H^*(\huxi,\Q)$ to $H^k(\huxi, \Q)$. If $\a\in K=\ker(H^*(\zuxi,\Q)\to H^*(\suxi,\Q))$, then $\ch(\m F) \cdot q^*_{\zuxi}(\a)$ vanishes since the universal sheaf $\m F$ is supported on the open subset $\m H_{\uxi}\times S_\uxi\subset \m H_{\uxi}\times Z_\uxi$. It follows that the map $\ch_{\m F}(-)$ factors through $H^*(\zuxi,\Q)/K\cong H^*_{\pure}(\suxi)  $ i.e. the pure part of $H^*(\suxi, \Q)$, which is $H^*(\suxi,\Q)$ itself due to Proposition \ref{prop: purity of suxi}. So we can also write $\ch_{\m F}(-): H^*(\suxi, \Q)\to H^*(\huxi,\Q)$. Since $H^*(\suxi, \Q) \cong H^*(C,\Q)\oplus \bigoplus_{i=1}^l\Q[\Xi_{n,i}]$ by Proposition \ref{prop:coh of surface}, we are going to describe the image of each component under $\ch_{\m F}(-)$. 

     Recall the following notations: $\pi: Z_{\uxi}\to C$ is the composition of the blow-up morphism and the projection $\zuxi\to \overline{M}\to C$. Let $\nu_i: \Xi_{n,i}\to \zuxi$ be the closed embedding of the exceptional divisor for $1\leq i\leq l.$ Denote by $q_{\huxi},q_{\zuxi}$ the projection from $\huxi\times \zuxi$ to the corresponding factors and $p_{\huxi},p_{C}$ the projection from $\huxi\times C$ to the corresponding factors. The notation is summarized in the following diagram.
    \begin{equation*}
        \begin{tikzcd}
            \huxi \arrow[d, equal] & \huxi\times \zuxi \arrow[l, swap, "q_{\huxi}"] \arrow[d, "\Id \times \pi"] \arrow[r, "q_{\zuxi}"] & \zuxi \arrow[d, "\pi"] \\
            \huxi & \huxi \times C \arrow[l, swap, "p_{\huxi}"] \arrow[r, "p_C"] & C
        \end{tikzcd}
    \end{equation*}
    
    \begin{lem}
    Every element in the image $\ch_{\m F}(H^*(C, \Q))\subset H^*(\huxi, \Q)$ can be written as a linear combination of the K\"{u}nneth components of the Chern classes of $\euxi$.
    \end{lem}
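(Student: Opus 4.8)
The plan is to apply Grothendieck--Riemann--Roch to the proper morphism $g:=\Id\times\pi:\huxi\times\zuxi\to\huxi\times C$ in order to rewrite $\ch_{\m F}$, restricted to the $H^*(C,\Q)$-summand, as a tautological integral against the universal bundle $\euxi$. The single external input I would isolate is the spectral correspondence in families: the support of $\m F$ is pure of dimension one and, having class $\sm_\um$ with $\sm_\um\cdot f=0$, is finite over $C$; hence $g$ is finite on $\mathrm{Supp}(\m F)$, the higher direct images $R^{>0}g_*\m F$ vanish, and $g_!\m F=g_*\m F$ is a rank $r$ bundle on $\huxi\times C$ which we may normalize to be the universal bundle, so that $g_*\m F=\euxi$. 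Recall also that the $H^*(C,\Q)$-summand of $H^*(\suxi,\Q)\cong H^*(C,\Q)\oplus\bigoplus_i\Q[\Xi_{n,i}]$ is $q^*H^*(C,\Q)=\iota^*\pi^*H^*(C,\Q)$, so for $\a\in H^*(C,\Q)$ the relevant value is $\ch_{\m F}(\pi^*\a)$.

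First I would run GRR for $g$. Using $\Td_{\huxi\times\zuxi}=q_{\huxi}^*\Td_{\huxi}\cdot q_{\zuxi}^*\Td_{\zuxi}$ and $\Td_{\huxi\times C}=p_{\huxi}^*\Td_{\huxi}\cdot p_C^*\Td_C$, together with the projection formula applied to the factor $q_{\huxi}^*\Td_{\huxi}=g^*p_{\huxi}^*\Td_{\huxi}$, the common factor $p_{\huxi}^*\Td_{\huxi}$ cancels and one is left with the clean identity
\begin{equation*}
 g_*\big(\ch(\m F)\cdot q_{\zuxi}^*\Td_{\zuxi}\big)=\ch(\euxi)\cdot p_C^*\Td_C .
\end{equation*}
Next I would unwind the definition of $\ch_{\m F}$, writing $q_{\huxi,*}=p_{\huxi,*}\circ g_*$ and $q_{\zuxi}^*\pi^*=g^*p_C^*$, and applying the projection formula once more to pull $p_C^*\a$ out of $g_*$. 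Combined with the displayed identity this yields
\begin{equation*}
 \ch_{\m F}(\pi^*\a)=p_{\huxi,*}\big(\ch(\euxi)\cdot p_C^*(\a\cdot\Td_C)\big).
\end{equation*}
Expanding $\a\cdot\Td_C$ in a homogeneous $\Q$-basis of $H^*(C,\Q)$ and integrating along $p_{\huxi}$ then exhibits the right-hand side as a $\Q$-linear combination of the slant products $p_{\huxi,*}(\ch(\euxi)\cdot p_C^*\gamma)$, $\gamma\in H^*(C,\Q)$, which are exactly the K\"unneth components of the Chern character of $\euxi$. Since the K\"unneth components of $\ch(\euxi)$ and of the Chern classes $c_k(\euxi)$ are related by the usual universal polynomial identities and generate the same subalgebra, this gives the assertion.

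The main obstacle is the identification $g_*\m F=\euxi$ with vanishing higher direct images; everything after it is formal manipulation with GRR and the projection formula. A secondary point to treat with care is the normalization ambiguity of the universal objects by a line bundle pulled back from $\huxi$: replacing $\euxi$ by $\euxi\otimes p_{\huxi}^*N$ only multiplies $\ch(\euxi)$ by $p_{\huxi}^*\ch(N)$, which does not enlarge the relevant span of K\"unneth components, so the conclusion is insensitive to the choice of universal bundle. I would also remark explicitly, where the statement says ``Chern classes'', that the computation produces K\"unneth components of the Chern character, and that these may be exchanged for those of the Chern classes for the purposes of generation used in Proposition~\ref{prop:pure generator}.
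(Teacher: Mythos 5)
Your proof is correct and takes essentially the same route as the paper: both apply Grothendieck--Riemann--Roch to $\Id\times\pi$ together with the projection formula and the spectral-correspondence identification $(\Id\times\pi)_*\m F=\euxi$ to arrive at $\ch_{\m F}(\pi^*(\a))=p_{\huxi,*}\{\ch(\euxi)\cdot p_C^*(\a\cdot \Td_C)\}$. Your additional remarks --- justifying the vanishing of higher direct images via finiteness of the support over $C$, exchanging K\"unneth components of $\ch(\euxi)$ for those of the Chern classes, and noting the insensitivity to the line-bundle normalization of the universal bundle --- are points the paper leaves implicit but do not alter the argument.
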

    \begin{proof}
        Note that by the spectral correspondence, $(\Id \times \pi)_*\m F = \euxi$. 
        Suppose $\pi^*(x)\in H^*(\suxi,\Q)$ is a class from $H^*(C,\Q)$. Using $q_{\huxi} = p_{\huxi}\circ (\Id \times \pi) $ and $\pi\circ q_{\zuxi} = p_C\circ (\Id \times \pi)$, we have 
        \begin{align*}
           \ch_{\m F}(\pi^*(x))&=  q_{\huxi,*}\{\ch(\m F) \cdot q^*_{\zuxi}(\pi^*(x) \cdot \Td_{\zuxi})\} \\
           &= p_{\huxi, *}\circ (\Id\times \pi)_*  \{ \ch(\m F ) \cdot q^*_{\zuxi}  \Td_{\zuxi} \cdot (\Id\times \pi)^* p_C^* (x)\}\\
            &= p_{\huxi, *} \{ (\Id\times \pi)_*(\ch(\m F) \cdot q^*_{\zuxi}  \Td_{\zuxi}) \cdot p^*_C(x)\}\\
            &=p_{\huxi, *}\{ \ch(\euxi) \cdot p_C^*\Td_C \cdot p^*_C(x)\}
        \end{align*}
        where we used the projection formula for the second equality and the Grothendieck-Riemann-Roch theorem for the third equality. 
        
    \end{proof}

    \begin{lem}
        The image $\ch_{\m F} ([\Xi_{n,i}])$ in $H^*(\huxi, \Q)$ is equal to $\ch(\quxi^i)$. 
    \end{lem}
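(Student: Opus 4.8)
The plan is to localize the class $[\Xi_{n,i}]$ along the exceptional divisor and reduce the computation to a relative pushforward onto $\huxi$. Write $j_i := \Id\times \nu_i : \huxi\times \Xi_{n,i}\hookrightarrow \huxi\times \zuxi$ for the induced closed embedding, and let $\rho_{\huxi}:\huxi\times \Xi_{n,i}\to \huxi$ and $\rho_i:\huxi\times\Xi_{n,i}\to \Xi_{n,i}$ be the two projections, so that $q_{\huxi}\circ j_i=\rho_{\huxi}$ and $q_{\zuxi}\circ j_i=\nu_i\circ\rho_i$. Since $q_{\zuxi}^*[\Xi_{n,i}]=[\huxi\times\Xi_{n,i}]=j_{i,*}(1)$, the projection formula for $j_i$ gives
\begin{equation*}
    \ch(\m F)\cdot q_{\zuxi}^*\big([\Xi_{n,i}]\cdot \Td_{\zuxi}\big)=j_{i,*}\Big(\ch\big(L j_i^*\m F\big)\cdot \rho_i^*\,\nu_i^*\Td_{\zuxi}\Big),
\end{equation*}
where $Lj_i^*\m F=:\m F|_{\huxi\times\Xi_{n,i}}$ denotes the derived restriction. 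Applying $q_{\huxi,*}$ and using $q_{\huxi}\circ j_i=\rho_{\huxi}$, I obtain
\begin{equation*}
    \ch_{\m F}([\Xi_{n,i}])=\rho_{\huxi,*}\Big(\ch\big(\m F|_{\huxi\times\Xi_{n,i}}\big)\cdot \rho_i^*\,\nu_i^*\Td_{\zuxi}\Big).
\end{equation*}

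Next I would dispose of the Todd factor. The divisor $\Xi_{n,i}$ is the last exceptional curve, hence a $(-1)$-curve with $\Xi_{n,i}\cong \P^1$, and the support of $\m F$ meets it in $\sm_\um\cdot\Xi_{n,i}=m_i$ points; consequently the class of $\m F|_{\huxi\times\Xi_{n,i}}$ is represented by sheaves supported on a closed subset finite over $\huxi$. Writing $h$ for the point class of $\P^1$, it follows that every K\"unneth term of $\ch(\m F|_{\huxi\times\Xi_{n,i}})$ is divisible by $\rho_i^*h$, so multiplication by any class of the form $1+c\,\rho_i^*h$ acts as the identity, because $h^2=0$. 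Since $\nu_i^*\Td_{\zuxi}=\Td_{\P^1}\cdot\Td\big(N_{\Xi_{n,i}/\zuxi}\big)=1+\tfrac12 h$ is of exactly this form, the Todd factor drops out and $\ch_{\m F}([\Xi_{n,i}])=\rho_{\huxi,*}\,\ch\big(\m F|_{\huxi\times\Xi_{n,i}}\big)$. For the same reason the relative Todd class $\rho_i^*\Td_{\P^1}=1+\rho_i^*h$ disappears in Grothendieck--Riemann--Roch for $\rho_{\huxi}$, so the right-hand side equals $\ch\big(\rho_{\huxi,!}(\m F|_{\huxi\times\Xi_{n,i}})\big)$; and since the restriction is finite over $\huxi$, all higher direct images vanish, yielding $\ch_{\m F}([\Xi_{n,i}])=\ch\big(\rho_{\huxi,*}(\m F|_{\huxi\times\Xi_{n,i}})\big)$.

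It then remains to identify $\rho_{\huxi,*}(\m F|_{\huxi\times\Xi_{n,i}})$ with $\quxi^i$, and this is where the genuine content of the spectral correspondence enters and where I expect the main difficulty. Fibrewise the ranks already match, since $\rho_{\huxi,*}(\m F|_{\huxi\times\Xi_{n,i}})$ has rank $\sm_\um\cdot\Xi_{n,i}=m_i=\rk\,\quxi^i$. The point is to upgrade the fibrewise statement of Theorem \ref{spectral-corr} to a statement about universal objects: under the spectral correspondence, the restriction of the universal sheaf $\m F$ to the final exceptional divisor $\Xi_{n,i}$, pushed forward to $\huxi$, is precisely the $i$-th successive quotient $\m E^i_{\huxi\times p}/\m E^{i-1}_{\huxi\times p}=\quxi^i$ of the universal flag at $p$. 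Granting this identification, the chain of equalities above yields $\ch_{\m F}([\Xi_{n,i}])=\ch(\quxi^i)$, as claimed.

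The crux is thus to trace the construction of the spectral correspondence in families and match the blow-up datum encoded by $\Xi_{n,i}$ with the graded piece $\quxi^i$ of the parabolic flag. The natural starting point is the compatibility $(\Id\times\pi)_*\m F=\euxi$ already used in the previous lemma, together with base change along $\huxi\times\{p\}\hookrightarrow \huxi\times C$: since $\pi(\Xi_{n,i})=\{p\}$ and $\pi^{-1}(p)\cap S_\uxi=\bigsqcup_i(\Xi_{n,i}\cap S_\uxi)$, the restriction $\euxi|_{\huxi\times p}=\m E_p$ decomposes according to the eigenvalue components $\lambda_{i,n}$ fixed by the genericity of $\uxi$, and the $i$-th summand is exactly $\rho_{\huxi,*}(\m F|_{\huxi\times\Xi_{n,i}})$. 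Verifying that this decomposition is compatible, as $i$ varies, with the weight ordering $\a_1>\dots>\a_l$ defining the universal flag $\m E^\bullet_{\huxi\times p}$ is the one nontrivial point that requires unwinding the correspondence rather than formal intersection theory.
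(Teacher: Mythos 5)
Your intersection-theoretic reduction is set up correctly, but the step that kills the Todd factor rests on a false implication, and it is not a cosmetic point. Write $\ch(Lj_i^*\m F)=a\otimes 1+b\otimes h$ with $a,b\in H^*(\huxi,\Q)$ under the K\"unneth decomposition of $H^*(\huxi\times \Xi_{n,i},\Q)$. Then your two pushforwards give $\ch_{\m F}([\Xi_{n,i}])=\rho_{\huxi,*}\bigl(\ch(Lj_i^*\m F)\cdot(1+\tfrac12 h)\bigr)=\tfrac12 a+b$, while $\ch\bigl(\rho_{\huxi,*}(\m F|_{\huxi\times\Xi_{n,i}})\bigr)=a+b$, so the lemma hinges on $a=0$. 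Your justification --- support finite over $\huxi$ implies every K\"unneth term is divisible by $\rho_i^*h$ --- is false: for the graph $\Gamma\subset X\times\P^1$ of a nonconstant map $g:X\to\P^1$, the sheaf $\O_\Gamma$ has support finite over $X$, yet its derived restriction to $X\times\{y\}$ is $\O_{g^{-1}(y)}$, whose Chern character is nonzero; so the $H^0(\P^1)$-component $a$ need not vanish on finiteness grounds. And in your situation the support of $Lj_i^*\m F$ really does move over $\Xi_{n,i}$ as the Higgs bundle varies (the subleading coefficient beyond $\xi_i$ is not fixed), so the graph example is the relevant one. The correct argument uses instead that $\m F$ is supported in $\huxi\times S_\uxi$ and that $\Xi_{n,i}\not\subset S_\uxi$ (it meets the removed divisors, e.g.\ the strict transform $\Xi_{n-1,i}$): since $a$ equals the restriction of $\ch(Lj_i^*\m F)$ to $\huxi\times\{y_0\}$ for \emph{any} point $y_0\in\Xi_{n,i}$, choosing $y_0\in \Xi_{n,i}\setminus S_\uxi$ gives $a=\ch(0)=0$. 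With that repair your Todd-disposal and the identity $\rho_{\huxi,!}=\rho_{\huxi,*}$ (which also tacitly uses purity of $\m F$ on the fibers to kill $\m Tor_1$ of the restriction) go through.

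The second gap is the one you flag yourself: the identification $\rho_{\huxi,*}(\m F|_{\huxi\times\Xi_{n,i}})\cong \quxi^i$ is \emph{granted}, not proved, and it is precisely the nontrivial content of the lemma. The paper discharges it by Remark \ref{associated graded}: running the appendix construction of the spectral correspondence with $\O_{\Xi_{n,i}}$ in place of $\O_{n\Xi_{n,i}}$ produces the filtration $E^\bullet_p$ with $E^i_p/E^{i-1}_p\cong \pi_*(F\otimes\O_{\Xi_{n,i}})$ and identifies it with the restriction of $E^\bullet_D$ to $p$, which in families is exactly $\iota_*\quxi^i\cong(\Id\times\pi)_*\bigl(\m F\otimes(\Id\times\nu_i)_*\O_{\huxi\times\Xi_{n,i}}\bigr)$. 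The paper's proof then runs Grothendieck--Riemann--Roch in the opposite direction from yours: it starts from $\ch(\quxi^i)=p_{\huxi,*}\{\ch(\iota_*\quxi^i)\cdot p_C^*\Td_C\}$, substitutes the sheaf identity above, and applies GRR to $\Id\times\pi$ to land on $\ch_{\m F}([\Xi_{n,i}])$, never decomposing along $\Xi_{n,i}$ and hence never needing your $h$-divisibility claim explicitly. Your sketch of the mechanism (the distinct leading terms $\lambda_{i,n}$ split $E_D$, hence $E_p$, into summands matching the $\Xi_{n,i}$) is the right idea and is consistent with the appendix, but as submitted the proposal proves only the reduction, not the lemma.
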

    \begin{proof}
        Denote by $\iota: \m H_\uxi\times \{p\}\to \m H_\uxi\times C$ the inclusion and $p'_{\m H_\uxi}: \m H_\uxi\times \{p\}\to \m H_\uxi$ the projection.  Note that 
    \begin{equation*}
        \ch(\quxi^i)= p'_{\huxi *}\left\{(p'_{\huxi})^*\ch( \quxi^i)\right\}=p_{\huxi,*}\iota_* \{(p'_{\huxi})^*\ch( \quxi^i) \} = p_{\huxi*}\left\{\ch \left(\iota_* \quxi^i\right)\cdot p^*_C \Td_C \right\}
    \end{equation*}    
    where we apply the Grothendieck-Riemann-Roch theorem to the last equality. 

    By the construction of the spectral correspondence and Remark \ref{associated graded}, we have 
    \begin{equation*}
        \iota_*\quxi^i  \cong (\Id\times \pi)_* (\m F\otimes (\Id\times \nu_i)_*\O_{\huxi\times \Xi_{n,i}})
        \end{equation*}
     Then 
     \begin{align*}
        p_{\huxi*}\left\{\ch \left(\iota_* \quxi^i\right)\cdot p^*_C \Td_C \right\} &= p_{\huxi*}\left\{\ch \left((\Id\times \pi)_* (\m F\otimes (\Id\times \nu_i)_*\O_{\huxi\times \Xi_{n,i}})\right)\cdot p^*_C \Td_C \right\} \\
        &= p_{\huxi*}\left\{(\Id\times \pi)_*\left( \ch (\m F\otimes (\Id\times \nu_i)_*\O_{\huxi\times \Xi_{n,i}})\cdot q^*_{\zuxi} \Td_{\zuxi}\right)\right\} \\
        &=q_{\huxi*} \{ \ch(\m F) \cdot [\huxi\times \Xi_{n,i}] \cdot q^*_{\zuxi} \Td_{\zuxi} \} \\
        &= q_{\huxi*} \{ \ch(\m F) \cdot q_{\zuxi}^* [ \Xi_{n,i}] \cdot q^*_{\zuxi} \Td_{\zuxi} \} = \ch_{\m F}([\Xi_{n,i}]).
     \end{align*}
     where the second equality follows from the Grothendieck-Riemann-Roch theorem and the third inequality we use the fact that $\ch(f_*\O_Y) = [Y]$ for a Cartier divisor $f:Y\hookrightarrow X $. 
        Therefore, the expressions above combine to give $\ch(\quxi^i) = \ch_{\m F} ( [\Xi_{n,i}])$.
    \end{proof}

    Combining Proposition \ref{prop:pure generator} with the two previous lemmas, we obtain our main theorem. 
    \begin{theorem}\label{thm:purgen}
        Let $\uxi$ be generic. The pure cohomology $H^*_{pure}(\huxi, \Q)$ is generated by the K\"{u}nneth components of the Chern classes of $\euxi$ and the Chern classes of $\quxi^i$, where $1\leq i\leq l.$ 
    \end{theorem}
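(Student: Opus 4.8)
The plan is to deduce the theorem directly from Proposition \ref{prop:pure generator} together with the two lemmas just established, with the linear map $\ch_{\m F}(-)$ serving as the bridge between them. First I would invoke Proposition \ref{prop:pure generator}, which asserts that $H^*_{\pure}(\huxi,\Q)$ is generated as a ring by the K\"{u}nneth components of the Chern classes of the universal sheaf $\m F$. The remaining task is then purely to show that each of these K\"{u}nneth components already lies in the subring generated by the K\"{u}nneth components of $\ch(\euxi)$ and the classes $\ch(\quxi^i)$ for $1\leq i\leq l$; the reverse inclusion is automatic since those classes are themselves pure.

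Next I would use the structural observation recorded just before the lemmas: every K\"{u}nneth component of $\ch(\m F)$ in a fixed degree arises as a graded piece of the image of some class under $\ch_{\m F}(-)$, and this map factors through $H^*(\suxi,\Q)$. The factorization rests on two inputs already in hand, namely that $\m F$ is supported on $\huxi\times\suxi$, so that $\ch_{\m F}(-)$ annihilates $K=\ker(H^*(\zuxi,\Q)\to H^*(\suxi,\Q))$, and the purity of $\suxi$ from Proposition \ref{prop: purity of suxi}, which identifies $H^*(\zuxi,\Q)/K$ with the whole of $H^*(\suxi,\Q)$. Hence it suffices to evaluate $\ch_{\m F}(-)$ on $H^*(\suxi,\Q)$.

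I would then split this evaluation along the decomposition $H^*(\suxi,\Q)\cong H^*(C,\Q)\oplus\bigoplus_{i=1}^{l}\Q[\Xi_{n,i}]$ supplied by Proposition \ref{prop:coh of surface}, and apply the two preceding lemmas to the two summands. On $H^*(C,\Q)$, the first lemma rewrites $\ch_{\m F}(\pi^*x)$ as a linear combination of K\"{u}nneth components of $\ch(\euxi)$; on each line $\Q[\Xi_{n,i}]$, the second lemma identifies $\ch_{\m F}([\Xi_{n,i}])$ with $\ch(\quxi^i)$. Since these two families exhaust the image of $\ch_{\m F}(-)$, every K\"{u}nneth component of $\ch(\m F)$ lies in the subring generated by the $\ch(\euxi)$ and the $\ch(\quxi^i)$; combining this with Proposition \ref{prop:pure generator} yields the equality of subrings, which is exactly the claim.

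The genuine content is already concentrated in Proposition \ref{prop:pure generator}, whose Markman-type argument realizes the pure cohomology through the universal class, and in the Grothendieck--Riemann--Roch computations of the two lemmas, so the theorem itself is essentially a formal assembly of these pieces. The only point demanding care is the bookkeeping between the \emph{ring} generated by the K\"{u}nneth components of $\ch(\m F)$ and the \emph{linear span} produced by $\ch_{\m F}(-)$: one should note that $\ch_{\m F}(-)$ recovers precisely the individual K\"{u}nneth components as its graded pieces, and that these components are exactly the ring generators appearing in Proposition \ref{prop:pure generator}, so nothing multiplicative is lost in passing between the two descriptions.
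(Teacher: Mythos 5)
Your proposal is correct and follows essentially the same route as the paper, whose proof of this theorem is precisely the one-line combination of Proposition \ref{prop:pure generator} with the two lemmas, relying on the same factorization of $\ch_{\m F}(-)$ through $H^*(\suxi,\Q)$ and the decomposition of Proposition \ref{prop:coh of surface}. The connective details you supply (purity of $\suxi$, the recovery of individual K\"{u}nneth components as graded pieces of the image of $\ch_{\m F}(-)$, and the ring-versus-span bookkeeping) are exactly the points the paper records in the surrounding discussion rather than in the proof itself.
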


\subsection{Purity}\label{sec:purity}

In this section, we study the purity of the $\Q$-mixed Hodge structure on $H^*(\huxi,\Q)$ in the regular full-flag case. For simplicity, we write $\mathcal{H}={\bf \mathcal{H}}(C,p;r,d,\underline{\a}, \underline{1})$, $\mathcal{N}=\mathcal{N}(l,p)$, $P:\m H\to \m N$, $\mathcal{H}_\uxi=P^{-1}(\uxi)$, $\m M= \mathcal{M}(C, p;r,d,\ua, \underline{1})$ and $\m M_0= \mathcal{M}_0(C, p;r,d,\ua, \underline{1})$. Recall that by Proposition \ref{prop:isomodulispace}, we have $\m H\cong \m M$. Also, by definition, we have $\m M_0=P^{-1}(0)$. 

\begin{theorem}\label{thm: main full flag}

In the regular full-flag case, for any $\uxi \in \mathcal{N}$, the restriction morphism 
    \[
    \iota_\uxi^*:H^*(\mathcal{H},\Q) \to H^*(\huxi,\Q)
    \]
    is an isomorphism of $\Q$-mixed Hodge structures where $\iota_\uxi:\huxi \hookrightarrow \mathcal{H}$ is the canonical inclusion. In particular, both are of pure type. 
\end{theorem}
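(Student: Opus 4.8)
The plan is to exploit the $\C^*$-action that rescales the Higgs field, $(E,E^\bullet_p,\Phi)\mapsto(E,E^\bullet_p,t\Phi)$, which acts on $\mathcal{H}$ and covers the linear scaling action on $\mathcal{N}\cong\mathbb{A}^{l-1}$ (the space of residues summing to zero). Here $P$ is $\C^*$-equivariant and $\mathcal{M}_0=P^{-1}(0)$, while the fixed locus $\mathcal{H}^{\C^*}$ consists of systems of Hodge bundles, is projective, and lies in $\mathcal{M}_0$ since a fixed point forces $\uxi=0$. The first step is to check that every limit $\lim_{t\to0}t\cdot x$ exists in $\mathcal{H}$, so that $\mathcal{H}$ is semiprojective; the downward flow then deformation retracts $\mathcal{H}$ onto $\mathcal{H}^{\C^*}$, yielding purity of $H^*(\mathcal{H},\Q)$. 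Because $\mathcal{M}_0$ and every $P^{-1}(\ell)$ (for a line $\ell\subset\mathcal{N}$ through $0$) are $\C^*$-invariant and retract onto the \emph{same} fixed locus, I obtain canonical restriction isomorphisms $H^*(\mathcal{H})\xrightarrow{\sim}H^*(P^{-1}(\ell))\xrightarrow{\sim}H^*(\mathcal{M}_0)$, all pure with common Betti numbers $b_k$. This already settles the case $\uxi=0$.

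Now fix $\uxi\neq 0$, set $\ell=\C\uxi$ and $\mathcal{H}_\ell=P^{-1}(\ell)$. Since $P$ is smooth (Corollary \ref{smoothness-of-P}), $\mathcal{M}_0=(P|_{\mathcal{H}_\ell})^{-1}(0)$ is a smooth divisor in $\mathcal{H}_\ell$ with \emph{trivial} normal bundle, and its open complement is $U:=P^{-1}(\ell\setminus\{0\})\cong\C^*\times\huxi$, trivialised by the free action $(t,x)\mapsto t\cdot x$. Under this identification the restriction $\iota_\uxi^*$ factors as $H^*(\mathcal{H}_\ell)\xrightarrow{r}H^*(U)\xrightarrow{\mathrm{pr}}H^*(\huxi)$, where $r$ is restriction to the open set and $\mathrm{pr}$ is restriction to a fibre $\{1\}\times\huxi$.

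The engine of the argument is the Gysin sequence of $(\mathcal{H}_\ell,\mathcal{M}_0)$. The Euler class of the normal bundle vanishes, so the Gysin pushforward composed with the (iso, hence injective) restriction $H^*(\mathcal{H}_\ell)\to H^*(\mathcal{M}_0)$ is zero; therefore the Gysin map is itself zero and the sequence breaks into short exact pieces $0\to H^k(\mathcal{H}_\ell)\xrightarrow{r}H^k(U)\to H^{k-1}(\mathcal{M}_0)(-1)\to 0$. Comparing this with the K\"unneth decomposition $H^k(U)=H^k(\huxi)\oplus H^{k-1}(\huxi)(-1)$ and the equalities $\dim H^k(\mathcal{H}_\ell)=\dim H^k(\mathcal{M}_0)=b_k$, a telescoping induction on $k$ forces $\dim H^k(\huxi)=b_k$ for all $k$. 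Finally, $r(H^k(\mathcal{H}_\ell))$ is pure of weight $k$, whereas the weight-$k$ part of $H^k(U)$ is exactly $\mathrm{Gr}^W_kH^k(\huxi)$ inside the first K\"unneth summand, the second summand $H^{k-1}(\huxi)(-1)$ having weights $\geq k+1$. Hence $\iota_\uxi^*=\mathrm{pr}\circ r=r$ is injective with image $\mathrm{Gr}^W_kH^k(\huxi)$; the dimension count then forces $H^k(\huxi)$ to be pure and $\iota_\uxi^*$ to be an isomorphism. As a restriction map it is a morphism of mixed Hodge structures between pure structures, so it is an isomorphism of $\Q$-MHS.

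The main obstacle is the input needed for semiprojectivity: one must show that the $\C^*$-orbits $t\mapsto t\cdot x$ converge in $\mathcal{H}$ as $t\to 0$ and that $\mathcal{H}^{\C^*}$ is proper. In the regular full-flag case I expect this to follow from properness of the parabolic Hitchin map together with the fact that the scaling action contracts the Hitchin base to its origin, and I would verify these two points with care. Once they are in place, the remainder—the Gysin sequence with trivial normal bundle, the Betti-number induction, and the single weight count that pins down $\iota_\uxi^*$ as the inclusion of the pure part—is essentially formal.
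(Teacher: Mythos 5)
Your proposal is correct and follows the paper's overall strategy---semiprojectivity of $\mathcal{H}$ deduced from properness of the parabolic Hitchin map, then purity and invariance of fiberwise cohomology along the smooth $\C^\ast$-equivariant morphism $P$---but where the paper simply invokes Proposition \ref{prop:semiprojHausel} (Hausel--Rodriguez-Villegas, Cor.\ 1.3.3) as a black box and applies it ``iteratively'' over the higher-dimensional base $\mathcal{N}$, you restrict once to the line $\ell=\C\uxi$ and re-derive the content of that proposition by hand: the trivialization $P^{-1}(\ell\setminus\{0\})\cong \C^\ast\times\huxi$ by the action, the Gysin sequence of the smooth divisor $\mathcal{M}_0\subset P^{-1}(\ell)$ with trivial normal bundle (vanishing Euler class plus injectivity of restriction to $\mathcal{M}_0$ forces the Gysin map to vanish), the telescoping Betti count against the K\"unneth decomposition of $H^k(\C^\ast\times\huxi)$, and the weight argument identifying the image of $H^k(P^{-1}(\ell))$ with $\mathrm{Gr}^W_kH^k(\huxi)$, which together force purity and the isomorphism. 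This is essentially the proof of the cited result unfolded, so you gain self-containedness and, via the single line through $\uxi$ and $0$, a cleaner substitute for the paper's coordinate-by-coordinate iteration; the paper gains brevity by citation. Two small corrections to your first paragraph: the downward flow retracts a semiprojective variety onto its \emph{core}, not onto the fixed locus $\mathcal{H}^{\C^\ast}$ (your restriction isomorphisms $H^*(\mathcal{H})\cong H^*(P^{-1}(\ell))\cong H^*(\mathcal{M}_0)$ hold because all three share the same core, since existence of $\lim_{t\to\infty}t\cdot x$ forces $P(x)=0$), and purity of $H^*$ of a semiprojective variety comes from the Bialynicki-Birula stratification by affine bundles over smooth projective fixed components rather than from the retraction alone, the core being possibly singular. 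Finally, the semiprojectivity input you flag as a to-do is exactly the paper's lemma: via the identification $\mathcal{H}\cong\mathcal{M}$ of Proposition \ref{prop:isomodulispace} it reduces to Yokogawa's properness of the Hitchin morphism, so your plan does close.
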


The proof relies on the property of semi-projective varieties.

\begin{definition}\cite[Definition 1.1.1]{HauselRodriguez2013}
    Let $X$ be a non-singular quasi-projective variety over $\C$ with a $\C^\ast$-action. We call $X$ semi-projective if the following two conditions hold:
    \begin{enumerate}
        \item the fixed point set $X^{\C^\ast}$ is proper
        \item for every $x \in X$, the limit $\lim_{\lambda \to 0}\lambda x$ exists for $\lambda \in \C^\ast$.
    \end{enumerate}
\end{definition}

\begin{prop}\cite[Corollary 1.3.3]{HauselRodriguez2013}\label{prop:semiprojHausel}
    Let $X$ be a non-singular complex algebraic variety and $f:X \to \C$ is a smooth morphism. Also, suppose that $X$ is semi-projective with a $\C^\ast$-action making $f$ equivariant covering a linear action of $\C^\ast$ on $\C$ with positive weight. Then the fibers $X_c:=f^{-1}(c)$ have isomorphic cohomology supporting pure mixed Hodge structures. 
\end{prop}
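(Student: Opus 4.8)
The plan is to realize $\mathcal{H}\cong\mathcal{M}$ (Proposition \ref{prop:isomodulispace}) as a semi-projective variety over which $P$ is an equivariant family, and then to feed this into the semi-projective machinery behind Proposition \ref{prop:semiprojHausel}. Let $\C^\ast$ act on $\mathcal{H}$ by rescaling the Higgs field, $\lambda\cdot(E,E^\bullet_p,\Phi)=(E,E^\bullet_p,\lambda\Phi)$; this preserves (semi)stability and, since the polar part is $\Id\otimes\xi_i$ on the $i$-th graded piece, it rescales $\uxi\mapsto\lambda\uxi$. Hence the action covers the linear scaling on $\mathcal{N}\cong\C^{l-1}$, which has positive weight and the single fixed point $0$, and $P$ is $\C^\ast$-equivariant. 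By Corollary \ref{smoothness-of-P}, $P$ is smooth, and as $\mathcal{N}$ is smooth so is the quasi-projective variety $\mathcal{H}$; moreover $\mathcal{H}_0:=P^{-1}(0)=\mathcal{M}_0$.

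The first task is to verify that $\mathcal{H}$ is semi-projective for this action: the fixed locus is proper (it parametrizes parabolic systems of Hodge bundles) and every limit $\lim_{\lambda\to0}\lambda\cdot x$ exists. The existence of these limits is the one genuinely non-formal point — it is the boundedness/properness statement for the scaling flow, which I would establish by adapting Simpson's and Hitchin's arguments to the parabolic full-flag setting — and I expect it to be the main obstacle. Granting it, the $\C^\ast$-invariant smooth subvarieties $\mathcal{H}_0$ and $\mathcal{H}^{(\uxi)}:=P^{-1}(\C\uxi)$ (the restriction over the line through $\uxi$ and $0$, smooth as a base change of the smooth $P$) are semi-projective as well. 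Crucially, because $P$ is equivariant over a positive-weight action, $\lim_{\lambda\to\infty}\lambda\cdot x$ can exist only when $P(x)=0$; therefore the cores of $\mathcal{H}$, $\mathcal{H}_0$ and $\mathcal{H}^{(\uxi)}$ all coincide with a single projective subvariety $\mathcal{C}\subset\mathcal{H}_0$, onto which each of these spaces deformation retracts (semi-projective theory of \cite{HauselRodriguez2013}).

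With the cores identified I would factor $\iota_\uxi^*$ as $H^*(\mathcal{H})\to H^*(\mathcal{H}^{(\uxi)})\to H^*(\mathcal{H}_\uxi)$ and show each arrow is an isomorphism. For the first arrow, $\mathcal{C}\subset\mathcal{H}^{(\uxi)}\subset\mathcal{H}$ and both outer spaces retract onto $\mathcal{C}$, so $H^*(\mathcal{H})\to H^*(\mathcal{C})$ and $H^*(\mathcal{H}^{(\uxi)})\to H^*(\mathcal{C})$ are isomorphisms; since restriction is transitive, $H^*(\mathcal{H})\to H^*(\mathcal{H}^{(\uxi)})$ is an isomorphism. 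For the second arrow I apply the input of Proposition \ref{prop:semiprojHausel} to the line family $\rho:\mathcal{H}^{(\uxi)}\to\C$, which is smooth, $\C^\ast$-equivariant of positive weight with semi-projective total space. The $\C^\ast$-trivialization of $\rho$ over $\C^\ast$ makes $R^q\rho_*\Q$ constant away from $0$ with trivial monodromy, and the semi-projective structure forces the specialization map at $0$ to be an isomorphism (this is exactly what the proof of Proposition \ref{prop:semiprojHausel} supplies); hence $R^q\rho_*\Q$ is the constant sheaf on $\C$. The Leray spectral sequence over the contractible base $\C$ then degenerates, and its edge map identifies $H^*(\mathcal{H}^{(\uxi)})$ with $H^*(\rho^{-1}(1))=H^*(\mathcal{H}_\uxi)$ via restriction to the fiber.

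Composing, $\iota_\uxi^*$ is an isomorphism; the case $\uxi=0$ is the first arrow alone, so the argument is uniform in $\uxi$. Finally, every group in sight is identified by restriction with $H^*(\mathcal{C})$ for the projective $\mathcal{C}$, so all carry pure Hodge structures, and since each arrow is induced by an algebraic morphism, $\iota_\uxi^*$ is an isomorphism of $\Q$-mixed Hodge structures. In summary, once semi-projectivity (the existence of the $\lambda\to0$ limits for parabolic Higgs bundles) is in hand, the statement follows formally from the Bialynicki--Birula/core machinery together with Proposition \ref{prop:semiprojHausel}.
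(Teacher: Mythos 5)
The statement you were asked to prove is the general result of Hausel--Rodriguez-Villegas \cite[Corollary 1.3.3]{HauselRodriguez2013}: for \emph{any} non-singular variety $X$ with a smooth morphism $f:X\to\C$, equivariant for a semi-projective $\C^\ast$-action covering a positive-weight linear action on $\C$, all fibers $X_c$ have isomorphic, pure cohomology. In the paper this proposition carries no proof --- it is quoted from \cite{HauselRodriguez2013} and then applied. Your proposal does not prove this abstract statement; instead it proves the downstream application, essentially Theorem \ref{thm: main full flag} together with the lemma that $\mathcal{H}$ is semi-projective (scaling action on the Higgs field, equivariance of $P$ over $\mathcal{N}$, restriction to the line $\C\uxi$). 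Decisively, at the one point where the actual content of the proposition is required --- that the specialization map at $0$ is an isomorphism and that the fiber cohomology is pure --- you write that this ``is exactly what the proof of Proposition \ref{prop:semiprojHausel} supplies.'' As a proof of the statement posed, this is circular: the result is assumed, not established.

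What a genuine proof must supply, for the abstract $f:X\to\C$, is roughly: (i) the inclusion of the proper core $\mathcal{C}(X)$ (the points flowing to the fixed locus as $\lambda\to\infty$) is a deformation retract, and since the inclusion is algebraic, comparing weight bounds --- $H^k$ of a compact variety has weights $\leq k$, $H^k$ of a smooth variety has weights $\geq k$ --- forces $H^*(X)$ to be pure; (ii) positive-weight equivariance forces $\mathcal{C}(X)\subset X_0=f^{-1}(0)$, so $H^*(X)\to H^*(X_0)$ is an isomorphism of pure Hodge structures; (iii) for $c\neq 0$ one must \emph{prove} that $H^*(X)\to H^*(X_c)$ is an isomorphism, e.g.\ via the trivialization of $f$ over $\C^\ast$ induced by the action together with the retraction onto the core. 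Your sketch for the line family $\rho$ (constancy of $R^q\rho_\ast\Q$ over $\C^\ast$, Leray degeneration over the contractible base) is the correct skeleton for (iii), but the specialization isomorphism at $0$ is precisely the step you outsource to the proposition itself; note also that your purity-via-core argument applies only to the $\C^\ast$-invariant spaces $X$, $X_0$, $\mathcal{H}^{(\uxi)}$, while purity of $H^*(X_c)$ for $c\neq 0$ only follows \emph{after} (iii) is in place. Your line-restriction device $\mathcal{H}^{(\uxi)}=P^{-1}(\C\uxi)$ is a clean way to reduce the multi-dimensional base $\mathcal{N}$ to the one-dimensional base the proposition requires --- arguably tidier than the paper's ``iteratively applying'' in the proof of Theorem \ref{thm: main full flag} --- but that is content for the application, not for this statement.
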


\begin{lem}
    In the regular full-flag case, the relative moduli space $\mathcal{H}$ is semi-projective.
\end{lem}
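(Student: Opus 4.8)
The plan is to realize $\mathcal{H}$ as a semi-projective variety by constructing a $\C^\ast$-equivariant \emph{proper} Hitchin map to a vector space on which $\C^\ast$ acts with strictly positive weights, and then to read off the two defining conditions of semi-projectivity formally. First I would invoke Proposition \ref{prop:isomodulispace} to replace $\mathcal{H}$ by the coarse moduli space $\mathcal{M}$ of stable regular parabolic Higgs bundles with full flags at $p$, since $\mathcal{H}\cong\mathcal{M}$ in this case. Both non-singularity and quasi-projectivity are then immediate: $\mathcal{M}$ is a GIT quotient in the construction underlying Theorem \ref{existence-of-coarse-moduli}, hence quasi-projective, while Corollary \ref{smoothness-of-P} shows that $P:\mathcal{H}\to\mathcal{N}$ is smooth over the smooth (indeed linear) base $\mathcal{N}$, so $\mathcal{H}$ is non-singular.

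The $\C^\ast$-action is the usual scaling of the Higgs field, $\lambda\cdot(E,E^\bullet_p,\Phi)=(E,E^\bullet_p,\lambda\Phi)$, which preserves the parabolic structure and $\underline{\a}$-stability and hence descends to $\mathcal{M}\cong\mathcal{H}$; under this identification it simultaneously rescales the polar part, so that $P$ is equivariant for the scaling action on $\mathcal{N}$, whose unique fixed point is $0\in\mathcal{N}$. I would then introduce the Hitchin map
\[
h:\mathcal{M}\longrightarrow B:=\bigoplus_{i=1}^{r}H^0\!\left(C,M^{\otimes i}\right),\qquad (E,E^\bullet_p,\Phi)\mapsto \big(a_1(\Phi),\dots,a_r(\Phi)\big),
\]
where $a_i(\Phi)\in H^0(C,M^{\otimes i})$ are the coefficients of the characteristic polynomial of $\Phi$. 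The target $B$ is a vector space, $h$ is $\C^\ast$-equivariant with $\lambda$ acting on the $i$-th summand with weight $i$, so that all weights are strictly positive and $0\in B$ is the unique fixed point.

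The key step, and the one I expect to be the main obstacle, is the properness of $h$. In the non-parabolic case this is the classical theorem of Hitchin and Nitsure, and the proof adapts to the tamely ramified parabolic setting: by the valuative criterion one starts from a family of stable parabolic Higgs bundles over a punctured disc whose characteristic polynomial extends across the puncture, builds the corresponding family of spectral data over the full disc, extends the torsion-free spectral sheaf across the central fiber, and performs a semistable (elementary-modification) reduction so that the limit is again a stable parabolic Higgs bundle with the prescribed weights and flag; the limit is unique because $\mathcal{M}$ is separated. I would either cite the properness of the parabolic Hitchin map directly or reproduce this argument, the only genuine care being that the parabolic flag and weights are transported to the central fiber. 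One might hope to obtain properness for free from the spectral correspondence, but $\mathcal{H}_\uxi$ is only an open subset of the projective moduli space $\muxi$, so the support map is not proper on it; this is precisely why one must use the Hitchin map of $\mathcal{M}$, which is proper on the whole space.

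Granting properness of $h$, both conditions of semi-projectivity follow formally. Since $h$ is equivariant with positive weights, $\mathcal{H}^{\C^\ast}\subseteq h^{-1}\!\left(B^{\C^\ast}\right)=h^{-1}(0)$; the fiber $h^{-1}(0)$ is proper as the preimage of a point under a proper map, and $\mathcal{H}^{\C^\ast}$ is closed in it, giving condition (1). For condition (2), given $x\in\mathcal{H}$ the composite $\lambda\mapsto h(\lambda\cdot x)=\lambda\cdot h(x)$ extends over $\lambda=0$ to $B$ by sending $0$ to the origin; applying the valuative criterion of properness to $h$ over $\Spec\C\llb t\rrb$ then extends the orbit map $\C^\ast\to\mathcal{H}$ uniquely across $0$, and its value there is the required limit $\lim_{\lambda\to0}\lambda\cdot x$. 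Hence $\mathcal{H}$ is semi-projective.
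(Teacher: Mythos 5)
Your proposal is correct and takes essentially the same route as the paper: identify $\mathcal{H}$ with $\mathcal{M}$ via Proposition \ref{prop:isomodulispace}, use the scaling $\C^\ast$-action compatible with this isomorphism, and deduce semi-projectivity from the properness of the parabolic Hitchin morphism, which the paper simply cites from Yokogawa rather than reproving. The extra details you supply --- smoothness of $\mathcal{H}$ from Corollary \ref{smoothness-of-P}, the fixed locus lying in the proper fiber $h^{-1}(0)$, and existence of limits via the valuative criterion applied to orbit maps --- are precisely the standard deduction the paper compresses into that citation.
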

\begin{proof}
First, note that we have a natural scaling $\C^\ast$-action on $\mathcal{H}$: for $t \in \C^\ast$, $(E, E_D^\bullet, \alpha, \Phi) \in \mathcal{H}$, the action is defined as $t\cdot (E, E_D^\bullet, \alpha, \Phi):=(E, E_D^\bullet, \alpha, t\Phi)$. Similarly, there is a scaling $\C^\ast$-action on $\m M$ such that the isomorphism $\mathcal{H} \cong \m M$ we showed in Proposition \ref{prop:isomodulispace} is compatible with these scaling $\C^\ast$-actions. The conclusion follows from  the properness of the Hitchin morphism $h$ \cite[Corollary 5.12] {Yokogawa-compactification} which shows that $\m M$ is semi-projective with respect to the scaling $\C^\ast$-action. 
\end{proof}

\begin{proof}[Proof of Theorem \ref{thm: main full flag}]
By iteratively applying Proposition \ref{prop:semiprojHausel} to the canonical morphism $P:\mathcal{H} \to \mathcal{N}$, we achieve the purity of the $\Q$-mixed Hodge structure on $H^*(\huxi)$ for any $\uxi\in \mathcal{N}$. Since $P$ is smooth by Corollary \ref{smoothness-of-P}, the pullback morphism 
\[
\iota_\uxi^*:H^*(\mathcal{H}) \to H^*(\huxi)
\]
induced by the canonical inclusion $\iota_\uxi:\huxi \hookrightarrow\mathcal{H}$ is an isomorphism of $\Q$-mixed Hodge structures by the proof of \cite[Corollary 1.3.3]{HauselRodriguez2013}. This completes the proof. 
\end{proof}

\begin{rem}
    In the work of Komyo \cite{komyo} and \cite{hausel2022pw},a similar argument was used to show that the cohomology of the coarse moduli space of regular parabolic Higgs bundles is of pure type. 
\end{rem}

Following from Theorem \ref{thm:purgen}, we have the following generation result for the cohomology ring of the coarse moduli space of regular (strongly) parabolic Higgs bundles of the full-flag type. 

\begin{cor}\label{cor:generator reg par}
    The cohomology of the coarse moduli space of stable regular parabolic (resp. strongly parabolic) Higgs bundles $\mathcal{M}$ (resp. $\mathcal{M}_0$) of the full-flag type is generated by the Künneth components of the Chern classes of a universal bundle  and the Chern classes of successive quotients of a universal flag of subbundles. 
\end{cor}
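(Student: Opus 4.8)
The plan is to deduce the corollary from Theorem \ref{thm:purgen} by transporting generators along the restriction isomorphisms provided by Theorem \ref{thm: main full flag}, using the identifications $\m M \cong \m H$ of Proposition \ref{prop:isomodulispace} and $\m M_0 = P^{-1}(0) = \mathcal{H}_{\underline{0}}$. Throughout I work in the regular full-flag case with $r,d$ coprime, so that a universal bundle $\m E$ exists on $\mathcal{H}\times C$ together with a universal full flag on $\mathcal{H}\times\{p\}=\mathcal{H}$; these relative universal objects restrict over each fiber $\huxi$ to the fiberwise universal bundle $\euxi$ and flag, so the K\"{u}nneth components of $\ch(\m E)$ and the classes $\ch(\quxi^i)$ on $\mathcal{H}$ pull back under $\iota_\uxi^*$ to the corresponding classes on $\huxi$.

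First I treat $\m M\cong\m H$. Fix a generic $\uxi\in\mathcal{N}$. By Theorem \ref{thm: main full flag} the pullback $\iota_\uxi^*:H^*(\mathcal{H},\Q)\to H^*(\huxi,\Q)$ is a ring isomorphism of pure type, and by Theorem \ref{thm:purgen} the target $H^*_{\pure}(\huxi,\Q)=H^*(\huxi,\Q)$ is generated by the K\"{u}nneth components of $\ch(\euxi)$ and by the classes $\ch(\quxi^i)$. Let $R\subset H^*(\mathcal{H},\Q)$ be the subring generated by the corresponding universal classes on $\mathcal{H}$. Since $\iota_\uxi^*$ is a ring homomorphism, $\iota_\uxi^*(R)$ is exactly the subring generated by the restricted classes, hence all of $H^*(\huxi,\Q)$; as $\iota_\uxi^*$ is an isomorphism this forces $R=H^*(\mathcal{H},\Q)$. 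Transporting along $\m M\cong\m H$ gives the statement for $\mathcal{M}$.

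Next I treat $\m M_0=\mathcal{H}_{\underline{0}}$. The point $\uxi=0$ is the most non-generic choice, so Theorem \ref{thm:purgen} is unavailable at that fiber; instead I exploit that Theorem \ref{thm: main full flag} holds for every $\uxi\in\mathcal{N}$, so in particular $\iota_0^*:H^*(\mathcal{H},\Q)\to H^*(\mathcal{M}_0,\Q)$ is a ring isomorphism. Having already shown that $H^*(\mathcal{H},\Q)$ is generated by the universal classes, and since $\iota_0^*$ carries these to the classes of the restricted universal bundle $\m E|_{\mathcal{M}_0\times C}$ and the successive quotients of the restricted flag, surjectivity of $\iota_0^*$ yields that $H^*(\mathcal{M}_0,\Q)$ is generated by exactly those classes. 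This completes both cases.

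The main obstacle is precisely the passage to $\mathcal{M}_0$: the generation theorem is proved only for generic $\uxi$ (distinct leading terms of the $\xi_i$), whereas the strongly parabolic locus sits over $\uxi=0$. The device that circumvents this is the smoothness of $P$ together with the semi-projectivity of $\mathcal{H}$, which make all fibers cohomologically identical and let the maps $\iota_\uxi^*$ act as ring isomorphisms connecting the central fiber to a generic one through the total space. The only genuinely technical points left to verify are that the relative universal bundle and flag on $\mathcal{H}$ restrict fiberwise to the objects used in Theorem \ref{thm:purgen}, and that the generated subring is insensitive to the twist-by-a-line-bundle-from-the-base ambiguity in the choice of universal bundle; both are standard.
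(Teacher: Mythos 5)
Your proposal is correct and follows essentially the same route as the paper's proof: both transport the generation statement of Theorem \ref{thm:purgen} along the restriction isomorphisms of Theorem \ref{thm: main full flag}, first from a generic fiber $\huxi$ up to $\mathcal{H}\cong\mathcal{M}$ (since a ring isomorphism pulling the generated subring onto everything forces that subring to be all of $H^*(\mathcal{H},\Q)$), and then down to $\mathcal{M}_0=P^{-1}(0)$ via $\iota_0^*$, using that the universal bundle and flag on $\mathcal{M}\times C$ restrict to the fiberwise universal objects. The paper likewise fixes $\m E_\uxi=(\iota'_\uxi\times\Id)^*\m E$ and $Q^i_\uxi=(\iota'_\uxi)^*Q^i$, so the compatibility of Künneth components you flag as a technical point is handled there by the same choice, and no genuine gap remains.
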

\begin{proof}
    Recall that by Proposition \ref{prop:isomodulispace}, we have $\m H\cong \m M$. Choose a generic $\uxi$ and denote by $\iota'_{\uxi}:\huxi\to \m H\xrightarrow{\sim }\m M$ the composition. Let $\m E$ be a universal bundle on $\m M\times C$ and $\m E_{\uxi}$ be a universal bundle on $\huxi\times C$ such that $(\iota'_\uxi\times \Id)^*\m E = \m E_\uxi$. Let $\m E^\bullet_{\m M\times p}$ (resp. $\m E^\bullet_{\huxi\times p}$) be the universal flag of subbundles of $\m E$ (resp. $\m E_\uxi$)  on $\m M\times p$ (resp. $\huxi\times p$). Let $Q^i:=\m E^i_{\m M\times p}/\m E^{i-1}_{\m M\times p}$ and $Q^i_\uxi:=\m E^i_{\huxi\times p}/\m E^{i-1}_{\huxi\times p}$ be their associated quotients respectively. Clearly, $(\iota'_\uxi)^*\m E^\bullet_{\m M\times p} = \m E^\bullet_{\huxi\times p}$ and $Q^i_\uxi$ is isomorphic to $(\iota')_\uxi^*Q^i$. It is easy to see that the left-hand K\"{u}nneth components of the Chern classes of $\m E_\uxi$ in $H^*(\huxi)$ are exactly the pullback of the left-hand K\"{u}nneth components of the Chern classes of $\m E$ in $H^*(\m H)$ along $\iota'_\uxi\times \Id$. Moreover, we have $(\iota'_\uxi)^*\ch(Q^i) = \ch((\iota'_\uxi)^*Q^i)= \ch(Q^i_\uxi).$ Since $(\iota'_\uxi)^*: H^*(\m M)\to H^*(\huxi)$ is an isomorphism by Theorem \ref{thm: main full flag}, it follows from the generation result of $H^*(\huxi)$ (Theorem \ref{thm:purgen}) that $H^*(\m M)$ is also generated by the K\"{u}nneth components of the Chern classes of $\m E$ and the Chern classes of $Q^i$, where $1\leq i\leq l.$

    For the case of the coarse moduli space $\m M_0$ of stable regular strongly parabolic Higgs bundles of the full-flag type, note that $\m M_0= P^{-1}(0)$.  So Theorem \ref{thm: main full flag} again implies that $ H^*(\m H)\cong H^*(\m M_0)$. Applying the same argument as above shows the desired result. 
\end{proof}

We conclude this section by explaining why the same method is not applicable to study the purity when $n>1$. The issue is that $\relativeh$ is not neccessarily semi-projective because the locus of fixed point $\relativeh^{\C^\ast}$ may not be proper: consider the morphism $G:\relativeh \to \mathcal{M}(C, D;r, d, \underline{\a}, \underline{m})$, introduced in Remark \ref{rem:comparison fails}. It is easy to see that $G$ is $\C^\ast$-equivariant. Also, since the scaling $\C^\ast$-action is equivariant with respect to the Hitchin morphism $h:\mathcal{M}(C, D;r, d, \underline{\a}, \underline{m}) \to B$, the fixed locus belongs to $P^{-1}(0) \cap h^{-1}(0)$. While $h^{-1}(0)$ is proper, the intersection $P^{-1}(0) \cap h^{-1}(0)$ is in general open. This is because the freeness condition on the successive quotients of the flag $E_D^\bullet$ is an open condition. For example, take $D=3p$, $\underline{m}=(3,3)$ and $E$ is a trivial bundle of rank $2$. The restriction $E_D$ is a free $k[t]/t^3$-module of rank 2, $k[t]/t^3\oplus k[t]/t^3$. Let $\Phi$ be a Higgs field whose local form at $D$ is
\[
\Phi|_D=\begin{bmatrix}
    0 & t \\
    0 & 0
\end{bmatrix}
\]
Choose a filtration $0 \subset E_D^1 \subset E_D$ where $E_D^1=(t^2)\oplus (t)$. This filtration is preserved by $\Phi$ while the quotient $E_D/E_D^1$ is not free. 

On the other hand, for a generic $\uxi$, it can be shown that that the freeness of successive quotients $E_D^i/E_D^{i-1}$ follows automatically from the $\uxi$-parabolic condition.  This implies that $\huxi$ is a closed subscheme in $\mathcal{M}(C, D;r, d, \underline{\a}, \underline{m})$. 

\begin{conj}
    When $n>1$, the $\Q$-mixed Hodge structure on $H^*(\mathcal{H}_\uxi)$ is of pure type for any generic $\uxi$.
\end{conj}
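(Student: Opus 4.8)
The plan is to re-run the mechanism behind Theorem \ref{thm: main full flag}: exhibit $\huxi$ as a semi-projective variety, whereupon Deligne purity is automatic. The input for generic $\uxi$ is the closedness noted after Corollary \ref{cor:generator reg par}---there the $\uxi$-parabolic condition forces freeness of the successive quotients, so $\huxi$ is a \emph{closed} subscheme of $\m M = \mathcal{M}(C,D;r,d,\ua,\um)$. Since the Hitchin morphism $h\colon \m M \to B$ is proper \cite[Corollary 5.12]{Yokogawa-compactification}, its restriction $h_\uxi\colon \huxi \to B_\uxi := h(\huxi)$ is proper as well. I would then identify $B_\uxi$ as an affine space: writing the characteristic coefficients as $a_i \in H^0(C, M^{\otimes i})$ with $M = K_C(D)$, the $\uxi$-parabolic condition fixes their principal parts at $p$, so $B_\uxi$ is a torsor under $\bigoplus_{i=1}^r H^0(C, K_C^{\otimes i})$ and hence $B_\uxi \cong \mathbb{A}^N$. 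By the spectral correspondence (Theorem \ref{spectral-corr}), $h_\uxi$ is the relative compactified Jacobian of the family of spectral curves $C_b \subset \suxi$; as $\sm_\um$ meets only the divisors $\Xi_{n,i}$ and is disjoint from the boundary $Y_\uxi$, every $C_b$ is a \emph{compact} curve, so the fibres of $h_\uxi$ are proper.

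Since $\huxi$ is already smooth (Corollary \ref{smoothness-of-P}), the only missing ingredient for semi-projectivity is a suitable $\C^*$-action. The natural candidate scales the \emph{regular part} of the Hitchin base while fixing the principal part, $t\cdot(b^{0}+v) = b^{0} + (t^{i}v_{i})_{i}$; its unique fixed point is $b^{0}$, and the expected fixed locus in $\huxi$ is the compactified Jacobian of the single compact curve $C_{b^{0}}$, which is \emph{proper}---so together with the contracting flow on $B_\uxi$ and the properness of $h_\uxi$ this would furnish both requirements for semi-projectivity. This is exactly where the difficulty lies, and why the statement remains a conjecture: for generic $\uxi$ with $l \geq 2$ the action does not descend from the surface. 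The fibrewise scaling on $M = K_C(D)$ that would scale the symplectic form moves the blow-up centres $\f p_{a,i}$, hence does not act on $\suxi$; and because $C_b$ and $C_{tb}$ are non-isomorphic spectral curves there is no relative action on their Jacobians either. Thus the semi-projective route is genuinely obstructed and a new input is needed.

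A route that bypasses the $\C^*$-action is to study $h_\uxi$ directly via the decomposition theorem. Because $\huxi$ is smooth and $h_\uxi$ is proper, $Rh_{\uxi *}\Q$ is a pure complex, a direct sum of shifts of semisimple pure Hodge modules on $B_\uxi$, and Deligne purity of $H^{*}(\huxi)$ reduces to that of each group $\mathbb{H}^{j}(B_\uxi, {}^{p}\!R^{i})$ over the affine space $B_\uxi$. I would combine Artin vanishing with a Ng\^{o}-type support theorem for the weak abelian fibration $h_\uxi$ to pin down the supports and weights of the perverse pieces ${}^{p}\!R^{i}$; the crux, which I expect to be the main obstacle, is ruling out lower-weight contributions supported along the discriminant of singular spectral curves. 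A complementary strategy is \emph{confluence}: degenerate the order-$n$ pole into $n$ colliding simple poles and transport purity from the regular full-flag case (Theorem \ref{thm: main full flag}), the delicate point being to control the limiting mixed Hodge structure across this non-proper degeneration.
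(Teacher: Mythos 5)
You should first be clear that the paper does not prove this statement: it is stated as a conjecture and left open, so there is no proof to match yours against. Your proposal is, accordingly, a research program rather than a proof, and you say so yourself. To your credit, your diagnosis of why the paper's own method fails agrees exactly with the paper's discussion after Corollary \ref{cor:generator reg par}: the scaling action sends $(E,E^\bullet_D,\Phi)$ to $(E,E^\bullet_D,t\Phi)$, hence maps $\huxi$ to $\mathcal{H}_{t\uxi}$ and does not restrict to the fiber; and the relative space $\relativeh$ fails semi-projectivity for $n>1$ because the $\C^*$-fixed locus sits in $P^{-1}(0)\cap h^{-1}(0)$, which is in general only open (freeness of the successive quotients is an open condition). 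Your use of the closedness of $\huxi$ in $\m M$ for generic $\uxi$, and the resulting properness of $h_\uxi$, is also consistent with the paper.

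The genuine gaps are in your second route. First, purity of $Rh_{\uxi *}\Q$ as a complex of Hodge modules does \emph{not} by itself reduce the conjecture to a tractable statement: hypercohomology of a pure complex over an affine base is in general mixed (take $X=\C^*\times\P^1\to \C^*$, proper with smooth total space, where $H^1(X)$ has weight $2$), so the entire content of the conjecture is concentrated in the unproven support theorem and weight/vanishing estimates you defer to; note also that for smooth $\huxi$ the failure of purity in $H^k$ comes from weights \emph{above} $k$, so it is high-weight, not ``lower-weight,'' contributions along the discriminant that must be excluded. A Ng\^{o}-type support theorem for this twisted, $\b$-stability-dependent fibration is not available off the shelf and is not supplied. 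Second, a technical slip: fixing the polar data $\uxi$ pins each characteristic coefficient $a_j\in H^0(C,K_C(D)^{\otimes j})$ only through the truncations of the branches mod $z^n$, so $B_\uxi$ is a torsor under spaces of the shape $\bigoplus_j H^0(C,K_C^{\otimes j}((j-1)D))$ rather than $\bigoplus_j H^0(C,K_C^{\otimes j})$; this does not affect the conclusion $B_\uxi\cong \bb A^N$, but the asserted torsor structure (and any dimension count built on it) is wrong as stated. Your confluence suggestion likewise leaves the key step --- control of the limit mixed Hodge structure across a non-proper degeneration from the $n=1$ case of Theorem \ref{thm: main full flag} --- entirely open. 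In short: your negative analysis matches the paper, but neither proposed positive route closes the conjecture, and the decomposition-theorem reduction is presented as more automatic than it is.
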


\appendix
\section{Proof of spectral correspondence} \label{proof of spectral}
    In this appendix, we provide a proof of the spectral correspondence (Theorem \ref{spectral-corr}) as presented in \cite{Diaconescu_2018} (in one direction) with a slight modification on the stability conditions. We will follow the notations in Section 2.2.

     Let $F\in \fmsheaves$ be a pure dimension one sheaf on $S_\uxi$. Then we get a Higgs bundle $E=\pi_* F$ on $C$ with Higgs field $\Phi:E\to E\otimes K_C(D)$ obtained from the direct image of the multiplication map $F\to F\otimes \O_{S_\uxi}(\sm_0)$. The filtration on $E_D$ is obtained as follows. First, observe that there are the surjections $p_{i-1}: F\twoheadrightarrow F\otimes \O_{n\Xi_{n,i}}$ for $1\leq i\leq l.$ We can then define $F_{l-1}:= \ker(p_{l-1}).$ Since the divisors $\Xi_{n,i}$ are mutually disjoint, the composition $F_{l-1}\xrightarrow{\iota_{l-1}} F\xrightarrow{p_{l-2}} F\otimes \O_{n\Xi_{n,l-2}}$ is also surjective and we define $F_{l-2}$ to be $\ker(p_{l-2}\circ \iota_{l-1})$ which is contained in $F_{l-1}$. By constrction, we have $F_{l-1}/F_{l-2}= F\otimes \O_{n\Xi_{n,l-1}}$. Repeating this construction, we obtain a filtration of sheaves 
    \begin{equation*}\label{filtration-on-F}
                F^\bullet: F_0\subset F_1\subset... \subset F_{l-1}\subset F
            \end{equation*}
            such that $F_i/F_{i-1}\cong F\otimes \O_{n\Xi_{n,i}}$.

            Since $\pi$ restricted to $\suxi$ is affine, the functor $(\pi|_{\suxi})_*$ is exact. Since the sheaf $F$ is supported in $\suxi$, applying the functor $\pi_*$ and tensoring with $\O_D$ to the surjection $F\to F \otimes \O_{n\Xi_{n,l}}$, we obtain the surjection 
            \begin{equation*}
                q_{l-1}:E_D= (\pi_*F)_D\twoheadrightarrow \pi_*(F\otimes \O_{n\Xi_{n,l}})\otimes \O_D
            \end{equation*}
            Note that $\pi_*(F\otimes \O_{n\Xi_{n,l}})\otimes \O_D\cong \pi_*(F\otimes \O_{n\Xi_{n,l}}) $ since $\pi_*(F\otimes \O_{n\Xi_{n,l}})= \mu_*(\pi_*\nu_i^* F)$ where $\mu:D\to C$ and $\nu_i:n\Xi_{n,i}\to \zuxi$ denote the closed embeddings. We define  $E^{l-1}_D:=\ker(q_{l-1})\subset E_D$ as the first step in the filtration. The second step of the filtration is induced from the surjection $F_{l-1}\xrightarrow{\iota_{l-1}} F\xrightarrow{p_{l-2}} F\otimes \O_{n\Xi_{n,l-1}}$. We apply $\pi_*$ followed by tensoring with $\O_D$ to $p_{l-2}\circ \iota_{l-1}$ as before and obtain the surjection 
            \begin{equation*}
              (E_{l-1})_D \to E_D \to \pi_*(F\otimes \O_{n\Xi_{n,l-1}}) 
            \end{equation*}
            where $E_{l-1}:=\pi_*(F_{l-1})$. Since the composition $(E_{l-1})_D\to E_D\xrightarrow{q_{l-1}} \pi_*(F\otimes \O_{n,\Xi_{n,l}})$ is zero, the map $(E_{l-1})_D\to E_D$ factors through $E^{l-1}_D$. Then we define $E^{l-2}_D := \ker (E^{l-1}_D\to E_D\to  \pi_*(F\otimes \O_{n\Xi_{n,l-1}}))$.
            Proceed iteratively, we get a filtration
            \begin{equation*}
                E_D^\bullet:     0\subset E^1_D\subset...\subset E^{l-1}_D\subset E_D. 
            \end{equation*}
            Note that by construction $ E^i_D/E^{i-1}_D\cong \pi_*(F\otimes \O_{n\Xi_{n,i}}).$  Since the Higgs field is induced from the multiplication map $F\to F\otimes _{S_\uxi}\O_{\uxi}(\sm_0)$ which preserves the filtration $F^\bullet$, it follows that the Higgs field also preserves $E_D^\bullet$.

            To see that $(E,E_D^\bullet, \Phi)$ is $\uxi$-parabolic, we write the Higgs field in local coordinate. Choose an affine coordinate chart $(U, u,z)$ on $M$ with horizantal coordinate $z$ and vertical coordinate $u$ centered at the intersection between the zero section and $M_p$. Let $y\in H^0(M,\rho^*M)$ be the tautological section. Then we can write   
    \begin{equation*}
        y|_U= u\frac{dz}{z^n}
    \end{equation*}
    where $dz/z^n$ is viewed as a local frame. Fix the integer $i$. It suffices to check the condition \eqref{xi-parabolic} around the point $\f p_{1,i}$ and restrict the simultaneous blow-up at $l$ points in each step to blowing up at a single point each step.  We denote the $n$ blow-ups by $U_n\to U_{n-1} \to ... \to U_1\to U$ and $(u_j,z_j)$, $j=1,...,n$, the affine coordinate for $U_j$. Set $u_0= u , z_0=z$.
    
    Each $\xi_i$ can be represented as 
    \begin{equation*}
        \xi_i= \left(\lambda_{i,n} + \lambda_{i,n-1}z + ...+ \lambda_{i,1}z^{n-1} \right)\frac{dz}{z^n}
    \end{equation*}
    where $\lambda_{i,k}\in \C$. Then the first blow-up is given in an affine chart as 
    \begin{equation*}
        u_0 - \lambda_{i,n} = u_1z_0, \quad z_1=z_0
    \end{equation*}
    Similarly, for each $1\leq j\leq n$, we have 
    \begin{equation*}
        u_j - \lambda_{i,n-j} = u_{j+1}z_j, \quad  z_j= z_{j+1}
    \end{equation*}
    Combining the equations, we get that 
    \begin{equation*}
        u= u_1z_1+\lambda_{i,n}  = (u_2z_2+ \lambda_{i,n-1})z_2+ \lambda_{i,n} = ...= u_nz_n^n+ \lambda_{i,1}z^{n-1}_n+ ... + \lambda_{i,n-1}z_n+ \lambda_{i,n}, \quad z=z_n
    \end{equation*}
    The equations $u= u_nz_n^n+ \lambda_{i,1}z^{n-1}_n+ ... + \lambda_{i,n-1}z_n+ \lambda_{i,n}$ and $z=z_n$ define the morphism between the affine neighbourhoods of $U_n$ and $U$. Note that 
    \begin{equation*}
        u\frac{dz}{z^n} = u_ndz + \left(\lambda_{i,1}\frac{dz}{z}+ ... + \lambda_{i,n-1}\frac{dz}{z^{n-1}}+ \lambda_{i,n}\frac{dz}{z^n}\right)
    \end{equation*}
    The space of sections of $F$ in $U_n$ is a $\C[u_n,z]$-module $\Upsilon_F$. By \cite[Appendix A.3]{Diaconescu_2018}, $\pi_*(F\otimes \O_{n\Xi_{n,i}})$ is a locally free $\O_D$-module, which implies that its space of sections $\Upsilon_F \otimes_{\C[u_n,z]}\C[u_n,z]/(z^n)\cong B^{\oplus m_i}$ where $B=\C[z]/(z^n)$. Then the Higgs field restricted to $D$ is a $B$-module homomorphism $\Phi_{D,i}:B^{\oplus m_i}\to B^{\oplus m_i}\otimes dz/z^n$ given by multiplication by $u\frac{dz}{z^n}$. But
    \begin{equation*}
        u=\lambda_{i,1}z^{n-1}_n+ ... + \lambda_{i,n-1}z_n+ \lambda_{i,n}  \qquad \textrm{mod }B
    \end{equation*}
    Therefore, multiplication by $udz/z^n$ is the same as multiplication by $\xi_i\otimes \Id_{B^{\oplus m_i}}$ and $\Phi_{D,i} = \xi_i\otimes \Id_{E^i_D/E^{i-1}_D}$. This concludes the assignment of an irregular $\uxi$-parabolic Higgs bundle for each $F\in \fmsheaves$. For the other direction, we refer the readers to \cite[Section 3.3]{Diaconescu_2018}. 
    
    The equivalence of the stability conditions of both sides can be seen as follows. Recall that $\beta= \sum^l_{i=1} \b_i\Xi_{n,i} $ for some choice of $\b_i, 1\leq i\leq l$. Then the difference between $P_\b(F, t)$ and $P(F, t)$ becomes
        \begin{equation*}
            \int_X \beta\cdot \ch_1(F(t)) = \int_X \beta\cdot \ch_1(F)  = \int \beta\cdot \sm_\um = \sum \beta_im_i
        \end{equation*}
        Moreover, note that $nm_i= \chi(F\otimes \O_{n\Xi_{n,i}}) = \chi(\pi_*(F\otimes \O_{n\Xi_{n,i}})) = \chi (E^i_D/E^{i-1}_D)$. If $\beta$ is chosen such that $\a_i=\b_i/n$, then 
        \begin{equation}\label{difference-term}
            \int_X \beta\cdot \ch_1(F(t)) = \sum_{i=1}^l \a_i\chi(E^i_D/E^{i-1}_D)
        \end{equation}
        Recall that the ample divisor $A$ is chosen such that $A$ restricted to $S_\uxi$ is $\pi^*(\kappa p)$ for some sufficiently large $\kappa$, so we have $P(F, t) = P(\pi_*F, \kappa t)$. Therefore, combining with the equality \eqref{difference-term} above combine yields
        \begin{equation}\label{equiofpolynomials}
            p_\beta(F,t) = \para P_\a(\pi_* F, \kappa t).
        \end{equation}
        Now, the proof in \cite[Section 3]{Diaconescu_2018} also shows that under the correspondence, proper coherent subsheaves $F'$ of $F$  of pure dimension one on $S_\uxi$ correspond to proper coherent subsheaves $E'=\pi_*F'$ of $E=\pi_*F$ on $C$ preserved by $\Phi$ with the induced filtration $F'^\bullet_D.$ Since scaling the variable in a polynomial preserves the order, the equality \eqref{equiofpolynomials} implies the equivalence of the $\beta$-twisted $A$-Gieseker (semi)stability and the stability of irregular parabolic Higgs bundles.
        
        \begin{rem}\label{associated graded}
            Note that if we replace $\O_{n\Xi_{n,i}}$ in the proof above by $\O_{\Xi_{n,i}}$, then the construction of the filtration yields a filtration of vector spaces $E^\bullet_p: 0\subset E^1_p\subset\dots \subset E^{l-1}_p\subset E_p$ such that  $E^i_p/E^{i-1}_p\cong \pi_* (F\otimes \O_{\Xi_{n,i}}).$ Also, it is clear that the restriction of $E^\bullet_D$ to $p$ is $E^\bullet_p$.  
        \end{rem}

\section*{Acknowledgement}
We would like to thank Philip Boalch, Michi-aki Inaba, Tony Pantev, Qizheng Yin for many useful discussions. The second author gratefully acknowledges the financial support received from the Leverhulme Trust.

\printbibliography

\end{document}